\numberwithin{equation}{section} \theoremstyle{plain}
\newtheorem{theorem}{Theorem}[section]
\newtheorem{proposition}[theorem]{Proposition}
\newtheorem{lemma}[theorem]{Lemma}
\newtheorem{corollary}[theorem]{Corollary}
\newtheorem{definition}[theorem]{Definition}
\newtheorem*{mainthm3-repeat}{Theorem \ref{mainthm3}}
\newtheorem*{sec3-key-prop-rpt}{Theorem \ref{sec3-key-prop}}
\renewcommand{\leq}{\leqslant}
\renewcommand{\geq}{\geqslant}
\newsavebox{\proofbox}
\savebox{\proofbox}{\begin{picture}(7,7)  \put(0,0){\framebox(7,7){}}\end{picture}}
\newcommand\Z{\mathbb{Z}}
\newcommand\R{\mathbb{R}}
\newcommand\N{\mathbb{N}}
\newcommand\G{\mathbf{G}}
\renewcommand\H{\mathbf{H}}
\newcommand\SL{\operatorname{SL}}
\newcommand\SO{\operatorname{SO}}
\newcommand\GL{\operatorname{GL}}
\newcommand\Mat{\operatorname{Mat}}
\newcommand\rk{\operatorname{rk}}
\newcommand\tr{\operatorname{tr}}
\newcommand\ch{\operatorname{char}}
\newcommand\Sp{\operatorname{Sp}}
\newcommand\F{\mathbb{F}}
\newcommand\Q{\mathbb{Q}}
\newcommand\U{\mathbf{U}}
\newcommand\V{\mathbf{V}}
\newcommand\id{\operatorname{id}}
\def\endproof{\hfill{\usebox{\proofbox}}\vspace{11pt}}
\begin{document}

\title{Strongly dense free subgroups of semisimple algebraic groups}
\author[Breuillard]{Emmanuel Breuillard}
\address{Laboratoire de Math\'ematiques\\
B\^atiment 425, Universit\'e Paris Sud 11\\
91405 Orsay\\
FRANCE}
\email{emmanuel.breuillard@math.u-psud.fr}

\author[Green]{Ben Green}
\address{Centre for Mathematical Sciences\\
Wilberforce Road\\
Cambridge CB3 0WA\\
England}
\email{b.j.green@dpmms.cam.ac.uk}

\author[Guralnick]{Robert Guralnick}
\address{Department of Mathematics \\
University of Southern California\\
3620 Vermont Avenue\\
Los Angeles, California 90089-2532}
\email{guralnick@usc.edu}

\author[Tao]{Terence Tao}
\address{Department of Mathematics, UCLA\\
405 Hilgard Ave\\
Los Angeles CA 90095\\
USA}
\email{tao@math.ucla.edu}

\subjclass{20G40, 20N99}

\begin{abstract}
We show that (with one possible exception) there exist \emph{strongly dense} free subgroups in any semisimple algebraic group over a large enough field. These are nonabelian free subgroups all of whose subgroups are either cyclic or Zariski dense. As a consequence, we get new generating results for finite simple groups of Lie type and a strengthening of a theorem of Borel related to the Hausdorff-Banach-Tarski paradox. In a sequel to this paper, we use this result to also establish uniform expansion properties for random Cayley graphs over finite simple groups of Lie type.

\end{abstract}

\maketitle

\setcounter{tocdepth}{1}
\tableofcontents

\section{Introduction and statement of results}\label{sec1}

The existence of free subgroups of algebraic groups is intimately related to the Hausdorff-Banach-Tarski paradox (see \cite{lubotzky-book,wagon} for pleasant introductions to this beautiful subject). To our knowledge, it is Hausdorff\footnote{In fact, Hausdorff showed that if $\psi, \phi$ are rotations of angles $\frac{2\pi}{3}, \pi$ respectively whose axes meet and make an angle $\frac{\theta}{2}$, with $\cos \theta$ transcendental, then $\psi$ and $\phi$ have no relations except $\psi^3=1$ and $\phi^2=1$. They generate the free product $\Z/3\Z * \Z/2\Z$, which of course contains many nonabelian free subgroups, such as the one generated by $a:=\psi\phi\psi$ and $b:=(\phi\psi)^2\phi$.} who in 1914 \cite{hausdorff, hausdorff-book} constructed the first example of a free subgroup of rotations of $\R^3$ in the course of establishing the non-existence of finitely additive rotation-invariant measures on the sphere $S^2$.

That simple algebraic groups $\G(k)$ (say over an infinite finitely generated field $k$) always contain \emph{some} nonabelian free subgroup can nowadays be seen as a trivial consequence of the \emph{Tits alternative} \cite{tits}, according to which every finitely generated subgroup of invertible linear transformations over a field contains a non abelian free subgroup or a solvable subgroup of finite index. However, this fact is not by any means as deep as the Tits alternative and can be proven in a rather simple and constructive way (see Wagon's book \cite{wagon} for several explicit examples).

The problem of finding free subgroups of algebraic groups with special properties has been examined by various authors in the past, often because it was the key to the solution of a seemingly unrelated problem.

In his work on the approximation of continuous Lie groups by discrete subgroups, Kuranishi \cite{kuranishi} proved  in the 1950s that every semisimple real Lie group $G$ admits pairs of elements that generate a dense free subgroup. In fact, he showed that in a certain neighborhood of the identity of $G$, almost every pair of elements has this property (see also \cite{epstein}).

Dekker \cite{dekker1,dekker2} in the 1950s and later Borel \cite{borel-dom} and Deligne-Sullivan \cite{deligne-sullivan} established the existence of free subgroups of rotations acting on the sphere $S^n$ without fixed points (if $n$ is odd) or with commutative isotropy groups (if $n$ is even) thus proving that the sphere is $4$-paradoxical (see \cite{wagon}).

In another direction, T.~Gelander and the first author showed in \cite{breuillard-gelander1, breuillard-gelander2} that every subgroup of a semisimple real Lie group which is dense in the Euclidean topology contains a free subgroup on $2$ generators that is again dense in the Euclidean topology. This fact was then applied in \cite{breuillard-gelander2} to answer an old question of Carri\`ere and Ghys \cite{ghys-carriere} about the rate of volume growth of leaves in Riemannian foliations on compact manifolds.

In this paper, we will be concerned with finding free subgroups with a stronger property than density. Call a subgroup $\Gamma$ of an algebraic group $\G$ \emph{strongly dense} if every pair $x, y \in \Gamma$ of non-commuting elements of $\Gamma$ generate a dense\footnote{Except for Section \ref{secparadox} and unless otherwise stated, all topological notions in this paper will refer to the Zariski topology and we will simply say closed, open or dense, to mean Zariski-closed, Zariski-open or Zariski-dense.}  subgroup of $\G$.  In particular, any subgroup of $\Gamma$ is either abelian or dense.  Our main result is as follows.

\begin{theorem}[Existence of strongly dense free subgroups]\label{sec3-key-prop}
Let $\G(k)$ be a semisimple\footnote{Throughout this paper, semisimple algebraic groups are understood to always be connected.} algebraic group over an uncountable algebraically closed field $k$.   Assume that $\G(k)$ is not $C_2$ with $k$ of characteristic $3$.
Then there exists a free nonabelian subgroup $\Gamma$ of $\G(k)$ on two generators that is strongly dense.
\end{theorem}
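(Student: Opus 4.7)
The plan is to produce the required pair $(a,b)\in\G(k)\times\G(k)$ by a generic-point argument in the variety $\G\times\G$. Since $k$ is uncountable and algebraically closed, the set $(\G\times\G)(k)$ is not exhausted by a countable union of proper Zariski-closed subvarieties, so it will suffice to show that the set of ``bad'' pairs --- those $(a,b)$ for which either $\langle a,b\rangle$ fails to be free, or for which some pair of non-commuting elements of $\langle a,b\rangle$ fails to generate a Zariski-dense subgroup of $\G$ --- is contained in such a countable union.

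The freeness half is essentially standard. For each non-trivial reduced word $w\in F_2$, the map $\G^2\to\G$, $(a,b)\mapsto w(a,b)$, is a morphism, and its zero locus $\{w(a,b)=1\}$ is a closed subvariety. It is a proper subvariety because $\G(k)$ already contains a principal $\SL_2$, inside which Tits's theorem supplies a pair witnessing $w\neq 1$. Taking the union over the countable set of reduced words yields the freeness portion of the bad set.

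For strong density, fix a pair $(w_1,w_2)\in F_2\times F_2$ that does not commute in $F_2$. Then $(a,b)$ is bad for this pair iff some proper closed subgroup of $\G$ contains both $w_1(a,b)$ and $w_2(a,b)$, equivalently some maximal such subgroup $\H$ does. By the classification of maximal closed subgroups of simple algebraic groups (Dynkin in characteristic zero, Seitz and Liebeck--Seitz in positive characteristic), together with the easy reduction of the semisimple case to the simple one via its simple factors, the maximal proper closed subgroups of positive dimension fall into finitely many $\G$-conjugacy classes $[\H_1],\ldots,[\H_N]$; finite maximal subgroups, though countably many, each contribute only a zero-dimensional constraint once $(w_1,w_2)$ is fixed. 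It therefore suffices to show, for each positive-dimensional $\H_i$, that
$$
Z_{i,w_1,w_2}:=\bigl\{(a,b)\in\G^2 : w_1(a,b),\,w_2(a,b)\in g\H_i g^{-1}\text{ for some }g\in\G\bigr\}
$$
is contained in a proper closed subvariety of $\G^2$.

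This is the technical heart of the proof and the main obstacle. Consider the incidence
$$
\widetilde Z_i:=\bigl\{(a,b,g)\in\G^3 : w_1(a,b),\,w_2(a,b)\in g\H_i g^{-1}\bigr\};
$$
the change of variables $a'=g^{-1}ag$, $b'=g^{-1}bg$ identifies $\widetilde Z_i$ with $\G\times\Psi_{w_1,w_2}^{-1}(\H_i\times\H_i)$, where $\Psi_{w_1,w_2}:\G^2\to\G^2$, $(a,b)\mapsto(w_1(a,b),w_2(a,b))$ is the pair-word map. If $\Psi_{w_1,w_2}$ is \emph{dominant}, then a routine fiber-dimension computation gives $\dim\Psi_{w_1,w_2}^{-1}(\H_i\times\H_i)\leq 2\dim\H_i$, and factoring out the action of $N_{\G}(\H_i)$ yields $\dim Z_{i,w_1,w_2}\leq\dim\H_i+\dim\G<2\dim\G$, finishing the argument. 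The delicate step is therefore to establish dominance (or a sufficient transversality substitute) of $\Psi_{w_1,w_2}$ against every maximal $\H_i$, for every non-commuting $(w_1,w_2)\in F_2$. This proceeds by a case-by-case analysis over the types of $\H_i$ --- parabolics, subsystem (maximal rank reductive) subgroups, principal $\SL_2$'s, and the other irreducible maximal semisimple subgroups from the Seitz/Liebeck--Seitz lists --- combined with the standard fact that any two non-commuting elements of $F_2$ already freely generate a rank-$2$ free subgroup, which provides enough algebraic freedom in $(w_1(a,b),w_2(a,b))$ to escape any fixed conjugate of $\H_i\times\H_i$. The lone exception, $\G$ of type $C_2$ in characteristic $3$, reflects an exceptional isogeny special to that setting, which produces a maximal subgroup on which the required transversality provably fails.
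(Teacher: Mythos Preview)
The overall shape of your argument matches the paper's---reduce to showing that for each non-commuting pair $(w_1,w_2)$ the ``bad'' locus is contained in a proper closed subvariety, then take a countable union. But the technical heart, which you correctly identify as the delicate step, contains a genuine gap: the pair-word map $\Psi_{w_1,w_2}:\G^2\to\G^2$ is \emph{not} dominant in general, even for non-commuting $w_1,w_2$. For instance, if $w_1$ and $w_2$ are conjugate in $F_2$ (or more generally lie in a subgroup of $F_2$ generated by two conjugate elements), the image of $\Psi_{w_1,w_2}$ sits inside the variety of pairs $(x,y)\in\G\times\G$ with $x,y$ conjugate, which has dimension at most $2\dim\G-\rk\G$. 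So the fiber-dimension count you sketch simply breaks down for such pairs, and the ``sufficient transversality substitute'' you allude to is exactly the missing idea; the paper explicitly notes that this non-dominance ``defeats a naive attack'' and poses the characterization of dominant pair-word maps as an open problem.

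The paper circumvents this by a different mechanism. First, using only Borel's dominance theorem for \emph{single} word maps, one shows that generically each of $w_1(a,b)$, $w_2(a,b)$, $[w_1,w_2](a,b)$ topologically generates a maximal torus; this forces $\overline{\langle w_1(a,b),w_2(a,b)\rangle}$ to be either $\G$ or a proper connected semisimple subgroup $\H$ of \emph{maximal rank} (ruling out parabolics and non-semisimple reductives without needing any transversality of $\Psi_{w_1,w_2}$). Second, instead of a dimension count, one argues by induction on $\dim\G$: if the bad locus for some fixed maximal-rank $\H$ were not meagre, then $(w_1(a,b),w_2(a,b))\in\overline{\bigcup_g \H^g\times\H^g}$ for \emph{all} $(a,b)$, in particular for $(a,b)$ ranging over any proper semisimple $\H'<\G$. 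By the inductive hypothesis $\H'$ contains a strongly dense pair, so every such $\H'$ would have to arise as a ``degeneration'' of $\H$. The case-by-case work (Proposition~\ref{degeneration-prop}) then exhibits, for each maximal-rank $\H$, a semisimple $\H'$ that is \emph{not} a degeneration, using closed invariants on $\G$-modules (dimensions of composition factors, existence of invariant subspaces) rather than anything about $\Psi_{w_1,w_2}$. Incidentally, the $C_2$ exception in characteristic $3$ is not due to an exceptional isogeny (those occur for $B_n/C_n$ and $F_4$ in characteristic $2$, and $G_2$ in characteristic $3$); it arises because every proper semisimple subgroup of $C_2$ in characteristic $3$ is conjugate into the unique maximal-rank $A_1\times A_1$, so no witness $\H'$ exists.
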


We now make some remarks about this theorem.  First of all, the assumption on $k$ can be relaxed in certain ways. For example it is enough to assume that $k$ is of infinite transcendence degree over its prime field. We refer the reader to Section \ref{finite-dense} for precise statements. For countable fields of positive characteristic,  some condition on the transcendence degree is certainly necessary; for instance, all finitely generated subgroups of $\SL_n(\overline{\F_p})$ are contained in some finite group $\SL_n(\F_{p^k})$ and so are obviously not dense.
We do prove weaker results for all algebraically closed fields not algebraic over a finite field;  see Theorem  \ref{char 0}, Corollary
\ref{dense char p} and Theorem \ref{rationality issues}.   It seems very unlikely that the $C_2(k)$ with $k$ of characteristic $3$
is a true exception but it is a true exception to Proposition  \ref{degeneration-prop}, which is a key component of our method.   \vspace{3pt}

Note that once the existence of \emph{some} free subgroup on two generators in a given semisimple algebraic group $\G(k)$ has been established, it is easy to find plenty of such free pairs by a simple Baire category type of argument: for every word $w$ in the free group in two letters, the word variety \[ \mathcal{V}_w:=\{(a,b)\in \G(k) \times \G(k) \,  | \, w(a,b)=\id\}\] is a proper algebraic subvariety of $\G(k) \times \G(k)$. Hence as soon as $k$ is large enough (e.g. if $k$ is uncountable, or is of infinite transcendence degree over its prime field), the union of all word varieties is a proper subset of $\G(k) \times \G(k)$. The same type of argument shows that proving the existence of \emph{some} pair $(a,b)$ that generates a strongly dense free subgroup of $\G(k)$ is equivalent to proving that a ``generic''  pair  of elements of  $\G(k)$ generates a strongly dense free subgroup;  see Theorem \ref{generic strongly dense} for a precise formulation.\vspace{3pt}

While Theorem \ref{sec3-key-prop} is stated for free groups on two generators, it implies the same for free groups with $m$ generators for any $m \geq 2$, since a free group on two generators contains a free group on $m$ generators (and in fact contains a countably generated free subgroup) and any nonabelian subgroup
of a strongly dense subgroup is also strongly dense.

For us, the original motivation for proving Theorem \ref{sec3-key-prop} came while working on expansion properties in finite simple groups of Lie type. A companion paper \cite{bggt1} details these results in full. 




Let us now say a few words about the proof of Theorem \ref{sec3-key-prop}, which spreads over Sections \ref{strongly-dense}
and \ref{sec: degenerations}.  
First, by a similar Baire category type argument as the one we mentioned above, we reduce to proving (see Lemma \ref{singlewords}) a seemingly weaker result. Namely, for every given pair of non-commuting words $w_1,w_2$ in the free group, the set of pairs $(a,b)$ in $\G(k)$ such that $w_1(a,b)$ and $w_2(a,b)$ generate a dense subgroup is generic.

One ingredient in our proof is Borel's theorem \cite{borel-dom} that word maps $w: \G \times \G \rightarrow \G$ associated to every nontrivial word $w \in F_2$ in the free group are all dominant maps. Borel's proof (as well as that of Larsen \cite{larsen-words}) proceeds by  first showing the result for $\G=\SL_n$ by induction on $n \geq 2$, while the general case is proved by induction on $\dim\G$ after observing that every simple algebraic group other than $\SL_n$ contains semisimple subgroups of maximal rank all of whose factors are isogenous to some $\SL_n$.

Our strategy for proving Theorem \ref{sec3-key-prop} and Lemma \ref{singlewords} will be similar. We will first establish the result for $\G=\SL_n$ and then proceed by induction on the dimension of $\G$ via a case-by-case study for each semisimple algebraic group arising from the various root systems. Classical groups are treated in a uniform way, while exceptional groups, especially when the characteristic of the field is $2$, $3$ or $5$, require some additional care.
\vspace{11pt}

We turn now to some corollaries of our main theorem. Proofs of these are given in Sections \ref{finite-dense} and \ref{secparadox}. First, we obtain the following result regarding finite simple groups. 

 \begin{corollary}[Pairs of words generate]\label{finite}  Fix a positive integer $r$. Let $G=\G(\F)$ be a finite \textup{(}simply connected\textup{)} simple group of Lie type of rank
$r$ over the finite field $\F$ \textup{(}$G$ is allowed to be one of the Steinberg twisted groups or a Suzuki-Ree group\textup{)}.
Assume that $G$ is not of the form $\Sp_4(3^e)$ for some $e \geq 1$.   Let $F_2$ be the free group on two generators  and let $w_1,w_2 \in F_2$ be non-commuting words.   Then the probability that $w_1(a,b)$ and $w_2(a,b)$ generate $G$ tends to $1$ as $|\F| \rightarrow \infty$.
\end{corollary}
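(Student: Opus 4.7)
The plan is to deduce this from Theorem~\ref{sec3-key-prop}, or rather from its key technical consequence Lemma~\ref{singlewords}, by transferring the Zariski-density statement from $\overline{\F_p}$-points to $\F$-points via the Lang--Weil estimates, and then ruling out the residual proper subgroups using the classification of maximal subgroups of finite groups of Lie type.

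First I would apply Lemma~\ref{singlewords} to $\G$ over the algebraic closure $\overline{\F_p}$, where $p=\ch\F$; note that the excluded case $C_2$ in characteristic $3$ corresponds exactly to the excluded family $\Sp_4(3^e)$ in the corollary.  This produces a Zariski-open dense subvariety $U\subseteq\G\times\G$, which may be taken to be defined over the prime field $\F_p$ since $w_1,w_2$ are integral words, with the property that $\langle w_1(a,b),w_2(a,b)\rangle$ is Zariski-dense in $\G$ for every $(a,b)\in U(\overline{\F_p})$.  The Lang--Weil estimate applied to the complement of $U$ then gives that a proportion $1-O_{\G,w_1,w_2}(|\F|^{-1})$ of pairs $(a,b)\in G\times G$ lies in $U(\F)$; for any such pair, the subgroup $H:=\langle w_1(a,b),w_2(a,b)\rangle$ is a Zariski-dense subgroup of $\G$ contained in $G$.

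Next I would show that with probability $1-o(1)$ such an $H$ is in fact equal to $G$.  If not, $H$ is contained in some maximal subgroup $M$ of $G$, which is itself Zariski-dense in $\G$.  Invoking the classification of maximal subgroups of finite simple groups of Lie type (Aschbacher's theorem in the classical case, and the Liebeck--Seitz work in the exceptional case), every such $M$ either (i) is contained in a proper positive-codimension algebraic subgroup of $\G$ --- this includes parabolics, reductive subgroups of smaller rank, the tensor and imprimitive Aschbacher classes, the extraspecial normalizers, and the class $\mathcal{S}$ subgroups, whose members are either of order bounded independently of $|\F|$ or else lie in a proper subsystem subgroup --- and hence cannot be Zariski-dense in $\G$; or (ii) is of \emph{subfield} type, i.e.\ conjugate to a normalizer $N_G(\G^\sigma(\F_0))$ for some proper subfield $\F_0\subsetneq\F$ and some twist $\sigma$.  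Thus only the subfield case survives.

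Finally, a direct union bound handles the subfield subgroups.  For a proper subfield $\F_0$ of order $q_0\leq|\F|^{1/2}$, a subfield subgroup $M$ has $|M|=O(q_0^{\dim\G})$ while $|G|\gg|\F|^{\dim\G}$; the number of $G$-conjugates of $M$ is at most $|G|/|M|$, and the pairs $(a,b)\in G^2$ both of whose words $w_1(a,b), w_2(a,b)$ lie in some conjugate of $M$ contribute $O((q_0/|\F|)^{\dim\G})$ to the probability.  Summing over the $O(\log|\F|)$ proper subfields gives a total of $O(|\F|^{-\dim\G/2}\log|\F|)=o(1)$, as required.  The step I expect to be the main obstacle is the clean dichotomy in the preceding paragraph: specifically, verifying uniformly --- across all root-system types and in all characteristics, including the small ones --- that every non-subfield maximal subgroup of $G$ is indeed contained in some proper algebraic subgroup of $\G$, which is most delicate for the class $\mathcal{S}$ subgroups arising from small irreducible representations of exceptional groups.
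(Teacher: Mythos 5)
There is a genuine gap at the first step, and it is exactly the point the paper has to work around. You apply Lemma~\ref{singlewords} over $\overline{\F_p}$ to extract a Zariski-open $U$ on which $\langle w_1(a,b), w_2(a,b)\rangle$ is Zariski-dense. This cannot work: $\overline{\F_p}$ is countable, so the uncountability hypothesis of Lemma~\ref{singlewords} fails; more decisively, $\G(\overline{\F_p})$ is locally finite, so $\langle w_1(a,b),w_2(a,b)\rangle$ is always a finite group and is never Zariski-dense in the positive-dimensional variety $\G$ --- the set $U$ you describe is empty, and the later assertion that ``$H$ is a Zariski-dense subgroup of $\G$ contained in $G$'' is self-contradictory since $H\leq G$ is finite. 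The underlying obstruction, which the paper flags explicitly in the remark following the statement of Corollary~\ref{finite}, is that in positive characteristic ``generates a dense subgroup'' is not an open (or even constructible) condition on $(a,b)$, unlike in characteristic zero.

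The paper's fix is the ingredient you are missing: Lemma~\ref{gurtiep} (from Guralnick--Tiep, which rests on Larsen--Pink) produces finitely many $\G$-modules $V_1,\dots,V_t$ such that any closed subgroup of $\G(k)$ acting irreducibly on each $V_i$ is either all of $\G(k)$ or a conjugate of a subfield group $\G(\F_{q_0})$. Irreducibility on each $V_i$ is an open condition defined over $\F_p$ (Lemma~\ref{closed invariants}(i)), non-empty by Theorem~\ref{sec3-key-prop} plus a descent-of-fields argument (as in Theorem~\ref{char 0} and Proposition~\ref{locally-finite}). This is the correct constructible surrogate for ``Zariski-dense'' that survives specialization to finite fields: Lang--Weil applies to it, and any $\F$-point of it immediately yields $\langle w_1(a,b), w_2(a,b)\rangle$ conjugate to some $\G(\F_{q_0})$ --- with no appeal to Aschbacher's theorem or the Liebeck--Seitz classification of maximal subgroups of finite groups of Lie type, which you invoke in your step two and correctly identify as the delicate spot. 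Avoiding that classification is in fact a stated feature of the paper's argument (see the remark after Corollary~\ref{finite}). Your final subfield union bound is in the right spirit; the paper does it somewhat differently, estimating the probability that the single word value $w_1(a,b)$ lands in \emph{any} subfield conjugate, using the Fulman--Guralnick bounds on conjugacy class numbers and sizes together with Larsen--Shalev's fiber estimates, whereas your sketch implicitly treats the two word values as independent, which would itself need justification.
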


The proof of Corollary \ref{finite} and most of the results of Section 5 rely crucially on a criterion for when two group elements $x,y$ in $\G$ generate a dense subgroup. If $\ch(k)=0$, this condition is open in $\G \times \G$ (see \cite{gurnewton}). This is not so when $\ch(p)>0$. Instead there is a proper closed subvariety of $\G \times \G$ outside of which a pair $(x,y)$ generates a dense subgroup if and only if it generates an infinite subgroup (see Lemma \ref{gurtiep} and \ref{closed invariants} (i)).\vspace{11pt}

 \noindent \emph{Remark.} If one is interested in the generation properties simply of a random pair $a,b$ then more uniform and more general results are known -- see in particular \cite{kantor-lubotzky} for classical groups and certain exceptional groups and \cite{liebeck-shalev} for the remaining exceptional groups.   Both those papers used the classification of finite
simple groups in the proof (CFSG).   However, the proof in  \cite{kantor-lubotzky} for classical groups of bounded rank is easily modified to avoid this
(using the weaker result in \cite{larsen-pink} that there are only finitely many sporadic simple groups that have linear representations of fixed dimension -- this
does not depend upon CFSG).    On the other hand, the original proof for the larger exceptional groups in \cite{liebeck-shalev} is not so easily modified.   Our methods do not rely on CFSG.
\vspace{11pt}

In another direction, we derive the following consequence of Theorem \ref{sec3-key-prop} in relation with the Hausdorff-Banach-Tarski paradox.

\begin{corollary}[Simultaneous paradoxical decompositions] Let $U$ be a semisimple compact real Lie group. Then there exists $a,b \in U$ such that every homogeneous space of $U$ is $4$-paradoxical with respect to $a$ and $b$. In other words, for every proper closed subgroup $V \leq U$, one can partition $U/V$ into $4$ disjoint sets $A_1 \cup A_2 \cup A_3 \cup A_4$ in such a way that $A_1 \cup aA_2$ and $A_3 \cup bA_4$ are both partitions of $U/V$. In fact almost every pair $a,b$ \textup{(}say with respect to Haar measure\textup{)} in $U$ has these properties. Moreover $F=\langle a, b \rangle $ is free, its action on $U/V$ is locally commutative \textup{(}i.e. point stabilizers are commutative\textup{)} and every non trivial element has $\chi(U/V)$ fixed points, where $\chi(U/V)$ is the Euler characteristic.
\end{corollary}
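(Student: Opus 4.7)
The plan is to pass to the complexification $G = U_\C$, a connected semisimple complex algebraic group containing $U$ as a maximal compact (hence Zariski-dense) subgroup. Since $\C$ is uncountable of characteristic zero, Theorem \ref{sec3-key-prop} and its generic form (Theorem \ref{generic strongly dense}) provide a countable union $E$ of proper Zariski-closed subvarieties of $G \times G$ outside which every pair generates a strongly dense free subgroup. The Zariski-density of $U$ in $G$ ensures $(U \times U) \cap E$ is Haar-null, so almost every $(a,b) \in U \times U$ generates a strongly dense free subgroup $F \leq G$. A parallel argument, applied to the countably many word maps $w : G \times G \to G$ (each dominant by Borel \cite{borel-dom}) and to the countably many proper Zariski-closed subvarieties of $G$ cutting out the non-regular-semisimple locus together with $\{g : \chi(g) = 1\}$ for each nontrivial character $\chi$ of a maximal torus, further lets me assume that every nontrivial $g \in F$ is ``topologically generic'' in $U$: its Euclidean closure is the unique maximal torus $T_g$ of $U$ containing it.

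Fix such $(a,b)$ and a proper closed subgroup $V \leq U$. First, the Zariski closure $\overline V$ of $V$ in $G$ is a proper algebraic subgroup: since $U$ is a compact real form, $\Lie_\R U \cap i \Lie_\R U = 0$, so if $\overline V = G$ then $\Lie_\R V + i\Lie_\R V = \Lie_\C G$ forces $\Lie_\R V = \Lie_\R U$ and hence $V = U$. For local commutativity, if $g_1, g_2 \in F$ both fix a coset $xV$ then $x^{-1} g_i x \in V \subset \overline V$, so $\langle g_1, g_2 \rangle$ lies in the proper algebraic subgroup $x \overline V x^{-1}$, and strong density forces $g_1$ and $g_2$ to commute. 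Since $F$ is free of rank two, point stabilizers are cyclic.

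For the fixed-point count, let $g \in F \setminus \{1\}$. Topological genericity ensures that the Euclidean closure of $\langle g\rangle$ in $U$ equals $T_g$, so a coset $xV$ is fixed by $g$ iff $x^{-1}T_g x \subset V$. If $V$ has non-maximal rank then $V$ contains no maximal torus of $U$, the fixed set is empty, and this matches $\chi(U/V) = 0$ (Hopf--Samelson). If $V$ has maximal rank the fixed-point set is finite and discrete, and a Weyl-group bookkeeping (equivalently, the Lefschetz fixed-point theorem: the Lefschetz number of left translation by $g$ equals $\chi(U/V)$ since translation is homotopic to the identity on the connected group $U$, and each isolated fixed point contributes $+1$ by a direct computation of local index for compact-group rotations) yields exactly $|W_U|/|W_V| = \chi(U/V)$ fixed points.

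Finally, the $4$-paradoxical decomposition follows from Hausdorff's classical construction. Partition $F$ by the first letter of the reduced word into four pieces $P_1, P_2, P_3, P_4$ satisfying $P_1 \sqcup a P_2 = P_3 \sqcup b P_4 = F$, choose via the axiom of choice a set of $F$-orbit representatives on $U/V$, and distribute the orbits accordingly. The countably many fixed points (at most $\chi(U/V)$ per nontrivial element) can be absorbed into the four pieces by a Banach--Schr\"oder--Bernstein type argument, as in Wagon \cite{wagon}. I expect the main technical burden to lie in establishing topological genericity of all word values and in the Weyl-group fixed-point bookkeeping; local commutativity and the paradoxical decomposition itself are then immediate consequences of strong density and the standard Hausdorff machinery.
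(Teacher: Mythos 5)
Your proof is essentially correct and follows the same route as the paper: apply Theorem \ref{generic strongly dense} over $\C$, pass to the (Zariski-dense, hence measure-full) real points $U\times U$, use strong density plus Lemma \ref{generic-max-rank} to get local commutativity and the torus-closure property, and cite Hopf--Samelson and Wagon for the fixed-point count and the paradoxical decomposition. One small expository point: the sets $\{g:\chi(g)=1\}$ you want to avoid live inside a fixed maximal torus, so what must be excluded is the (Zariski closure of the) union of their $G$-conjugates, exactly as in Lemma \ref{generic-max-rank} of the paper; as written, your phrasing treats them as subvarieties of $G$ directly, which is not literally what is meant.
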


This result improves Theorem A and Theorem 3 of Borel's paper \cite{borel-dom}, as well as the aforementioned results by Dekker and Deligne-Sullivan, by giving a unified statement. We refer the reader to Section \ref{secparadox} for a discussion of the above corollary and its proof. \vspace{11pt}

We end this introduction by drawing the reader's attention to the following two problems.\vspace{6pt}

\noindent \emph{Problem 1.}
J.~Tits  showed that every finitely generated dense subgroup $\Gamma$ of $\G(k)$ contains a dense free subgroup. This is the beef of the \emph{Tits alternative}. It remains a challenging problem to determine whether or not $\Gamma$ always contains a strongly dense free subgroup.\vspace{6pt}

\noindent \emph{Problem 2.}
 Recall that, according to Borel's theorem \cite[Theorem 1]{borel-dom}, word maps $w: \G \times \G \rightarrow \G$ are dominant, if $\G$ is a semisimple algebraic group and $w$ a non-trivial word in the free group (see also Larsen's paper \cite{larsen-words}). Can one characterize the set of pairs of words $(w_1,w_2)$ in the free group $F_2$ such that the double word map $\G \times \G \rightarrow \G \times \G$ given by $(a,b) \mapsto (w_1(a,b),w_2(a,b))$ is dominant? Clearly a necessary condition is that the cyclic subgroups of $F_2$ generated by $w_1$ and by $w_2$ are not commensurate.   If $w_1$ and $w_2$ are conjugate in $F_2$, then it is quite
 easy to see that the image of the double word map  is contained in the set of conjugate pairs in $\G \times \G$ and so the image
 has dimension at most $2 \dim \G - \rk(\G)$.   The same is true whenever $w_1$ and $w_2$ are contained in a subgroup of $F_2$ generated by two conjugates
(for example,  when $w_2 = [w_1, w_3] := w_1^{-1} w_3^{-1} w_1 w_3$ is the commutator of $w_1$ with another word  --  we see directly that the matrix identity $\tr( A [A,B] ) = \tr(A)$ then provides a non-trivial algebraic constraint on $(w_1(a,b), w_2(a,b))$).   See \cite{horowitz} for some interesting examples when $\G=\SL_2$.   \vspace{6pt}





\textsc{Acknowledgments.} EB is supported in part by the ERC starting grant 208091-GADA.
BG was, for some of the period during which this work was being carried out, a Fellow at the Radcliffe Institute at Harvard. He is very happy to thank the Institute for providing excellent working conditions.
RG is supported by NSF grant DMS-1001962.   He also thanks the Institute for Advanced Study, Princeton.
TT is supported by a grant from the MacArthur Foundation, by NSF grant DMS-0649473, and by the NSF Waterman award.

\section{Strongly dense free subgroups of $\G(k)$}\label{strongly-dense}

We turn now to the proof of our main theorem. Let us begin by recalling the statement, namely that if $\G(k)$ is a semisimple algebraic group over an uncountable algebraically closed field $k$, then there is a non-abelian free subgroup $\Gamma \leq \G(k)$ that is strongly dense. This means that every pair of noncommuting elements of $\Gamma$ generates a dense subgroup of $\G(k)$. \vspace{11pt}

We will switch between various forms of our semisimple group -- either the adjoint form or simply connected form or something in between, depending upon
which is the easiest form to work with.  Since the center is contained in the Frattini subgroup, this has no effect on generation results or on whether
subgroups are free.
We begin by outlining the main steps of the argument. The rest of the section will be devoted to proofs of these steps, except for a certain key ingredient, Proposition \ref{degeneration-prop} below, which involves a rather long case check. We defer this to Section \ref{sec: degenerations}.

Here, then, are the key steps in the proof of Theorem \ref{sec3-key-prop}. As remarked in the introduction, the overall strategy is to proceed by induction on $\dim(\G)$. Assume, then, that the result is known for all semisimple $\G'(k)$ with $\dim \G' < \dim \G$ (aside those semisimple groups which involve
$C_2$ in characteristic $3$ as a factor).

\vspace{11pt}

\emph{Step 1:} reduction to looking at single pairs of words. This was also mentioned in the introduction; we add a little more detail now. Using a fairly standard ``category'' (in the sense of Baire) type of argument we reduce matters to the task of proving the following lemma. Hereinafter, a \emph{generic} set is one that contains the complement of a countable union of proper closed subvarieties. The complement of a generic set (that is, a set contained in a countable union of proper closed subvarieties) is called \emph{meagre}.

\begin{lemma}\label{singlewords} Suppose that $\G(k)$ is a semisimple algebraic group over an uncountable algebraically closed field $k$.
Assume that if the characteristic of $k$ is $3$, then $\G(k)$ has no factors of type $C_2$.
Let $w, w' \in F_2$ be non-commuting words.  Then for a generic set of $(a,b) \in \G(k) \times \G(k)$, $w(a,b)$ and $w'(a,b)$ generate a dense subgroup of $\G(k)$.
\end{lemma}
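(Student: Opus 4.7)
The plan is to analyse the ``bad set''
\[
B := \{(a,b) \in \G(k)^2 : \overline{\langle w(a,b), w'(a,b)\rangle} \neq \G\}
\]
and show it is contained in a countable union of proper closed subvarieties of $\G \times \G$. If $(a,b) \in B$, then both $w(a,b)$ and $w'(a,b)$ lie in some proper maximal closed subgroup $M$ of $\G(k)$, so $B \subseteq \bigcup_M X_M$ with $X_M := (w,w')^{-1}(M \times M)$ closed in $\G\times\G$. By the structure theory of semisimple algebraic groups (the Dynkin/Borel--de Siebenthal classification, extended to arbitrary characteristic by Liebeck--Seitz--Testerman), the positive-dimensional maximal closed subgroups of $\G$ fall into finitely many $\G$-conjugacy classes: parabolic subgroups, normalizers of subsystem (maximal-rank reductive) subgroups, and certain other reductive subgroups of strictly smaller rank. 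Maximal finite subgroups give only a countable family of low-dimensional subvarieties, which is automatically meagre.

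For each such conjugacy class $[M]$ the constructible set $\widetilde X_{[M]} := \bigcup_{g \in \G(k)} X_{gMg^{-1}}$ must be shown to have Zariski closure a proper subvariety of $\G \times \G$; equivalently, one must exhibit a single pair $(a_0,b_0)$ with $w(a_0,b_0)$ and $w'(a_0,b_0)$ not simultaneously in any conjugate of $M$. For \emph{parabolic} $M$, I would use Borel's dominance theorem \cite{borel-dom} to arrange that $w(a_0,b_0)$ is regular semisimple and $w'(a_0,b_0)$ is independent of it, so that the pair preserves no common flag. For \emph{normalizers of maximal tori} and for other reductive maximal subgroups of sufficiently small dimension, a fibre-dimension count on the morphism $(g,x,y) \mapsto (gxg^{-1}, gyg^{-1})$ from $\G \times M \times M$ to $\G \times \G$ gives $\dim \widetilde X_{[M]} < 2\dim \G$ and properness follows. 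The case $\G = \SL_n$ is handled separately, by a subsidiary induction on $n$ exploiting natural maximal subgroups $\SL_{n-1} \hookrightarrow \SL_n$; this provides the base of the ambient induction on $\dim \G$.

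The main obstacle is the case of \emph{subsystem} (maximal-rank reductive) subgroups $M$, where naive dimension counts fail because $\dim \G - \dim M \leq \rk \G$. Here the plan is to invoke Proposition~\ref{degeneration-prop}, a detailed degeneration argument carried out case-by-case across the root systems (with characteristics $2$, $3$, and $5$ requiring particular attention). The proposition yields explicit pairs $(a_0,b_0)$ escaping $\widetilde X_{[M]}$, essentially by degenerating a generic pair in $M \times M$ to a limit configuration where the inductive hypothesis applied to the semisimple part $M^{\sss}$ of $M$ produces the required non-containment. This is the source of the $C_2$-in-characteristic-$3$ exception, where the degeneration genuinely fails. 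Once every conjugacy class is handled, $B$ is contained in the finite union of the proper closed subvarieties $\overline{\widetilde X_{[M]}}$ together with a countable meagre contribution from finite maximal subgroups, which is the required bound.
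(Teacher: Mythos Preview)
Your overall strategy---case on the conjugacy class of a maximal subgroup containing $\langle w(a,b), w'(a,b)\rangle$---matches the paper's, but two of your cases have real gaps. For parabolics, making $w(a_0,b_0)$ regular semisimple and $w'(a_0,b_0)$ ``independent'' does not prevent a common invariant flag: two regular semisimple elements can sit together in a Borel. The paper instead observes (Lemma~\ref{step3-lem}, via Lemma~\ref{generic-max-rank}) that since $[w,w']$ is also a nontrivial word, generically $[w(a,b),w'(a,b)]$ itself generates a maximal torus; hence the closure $H$ of $\langle w(a,b),w'(a,b)\rangle$ has $[H,H]$ of rank $\rk\G$. This single move forces $H$ to be connected, semisimple, and of maximal rank, disposing of parabolics and smaller-rank reductive subgroups simultaneously, so your dimension count is never needed. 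It also shows why $\SL_n$ requires no separate induction on $n$: $\SL_n$ has no proper semisimple subgroups of maximal rank, so Lemma~\ref{step3-lem} already finishes that case.

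More seriously, you have inverted the logic of Proposition~\ref{degeneration-prop}. It does not ``degenerate a generic pair in $M\times M$'' and the induction is not applied to $M^{\sss}$. Rather, for each maximal-rank semisimple $\H$ it supplies an \emph{auxiliary} proper semisimple $\H'$ (not in general related to $\H$ by containment) which is \emph{not} a degeneration of $\H$: no $(u,v)\in\overline{W_2(\H)}$ generates a group with closure $\H'$. One then applies the induction hypothesis to this $\H'$: by Theorem~\ref{sec3-key-prop} for the lower-dimensional $\H'$ there exist $(a_0,b_0)\in\H'\times\H'$ with $\overline{\langle w(a_0,b_0),w'(a_0,b_0)\rangle}=\H'$, and this pair witnesses that $(w,w')^{-1}\bigl(\overline{W_2(\H)}\bigr)$ is not all of $\G\times\G$. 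You also omit the reduction from semisimple to simple $\G$ (the paper's Step~2), which is a separate short argument using Lemma~\ref{generic-max-rank} and surjectivity onto each simple factor.
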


In fact, if $\ch(k)=0$, we can replace generic by open and remove the uncountable hypothesis (see Theorem \ref{char 0} for details).
Since there are only countably many pairs of words to consider, we immediately obtain the following corollary:

\begin{theorem} \label{generic strongly dense}  Suppose that $\G(k)$ is a semisimple algebraic group over an uncountable algebraically closed field $k$.
If $\ch(k)=3$, assume that $\G(k)$ has no factors of type $C_2$.
 Then for a generic set of $(a,b) \in \G(k) \times \G(k)$, $\langle a, b \rangle$ is a strongly dense free group.
\end{theorem}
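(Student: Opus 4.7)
The plan is to derive Theorem \ref{generic strongly dense} as a routine consequence of Lemma \ref{singlewords} together with the observation that a countable intersection of generic sets is generic (equivalently, a countable union of meagre sets is meagre, since a countable union of countable unions of proper closed subvarieties is still a countable union of proper closed subvarieties).

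First I would show that the set $F \subseteq \G(k) \times \G(k)$ of pairs $(a,b)$ generating a free subgroup is generic. For each nontrivial $w \in F_2$, the word map $w : \G \times \G \to \G$ is a nonconstant morphism (indeed, by Borel's dominance theorem, it is even dominant), so the fiber $\mathcal{V}_w = \{(a,b) : w(a,b) = \id\}$ is a proper closed subvariety of $\G \times \G$. Since $F_2$ is countable, $\bigcup_{w \neq 1} \mathcal{V}_w$ is meagre, and its complement $F$ is generic.

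Next, for every pair of non-commuting words $(w, w') \in F_2 \times F_2$, Lemma \ref{singlewords} supplies a generic set $E_{w,w'} \subseteq \G(k) \times \G(k)$ such that $\langle w(a,b), w'(a,b)\rangle$ is dense in $\G(k)$ whenever $(a,b) \in E_{w,w'}$. There are only countably many such pairs of words, so the intersection
\[
E := F \cap \bigcap_{\substack{w,w' \in F_2 \\ [w,w'] \neq 1}} E_{w,w'}
\]
is still generic, and in particular is nonempty (and even dense) since $k$ is uncountable.

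Finally, I would check that every $(a,b) \in E$ gives a strongly dense free group. Freeness is immediate from $(a,b) \in F$. For strong density, take any noncommuting $x, y \in \langle a,b \rangle$. Since $\langle a,b\rangle$ is free, we may write $x = w(a,b)$ and $y = w'(a,b)$ for uniquely determined $w, w' \in F_2$; the fact that $x$ and $y$ do not commute in $\langle a,b\rangle$ translates to $[w,w'] \neq 1$ in $F_2$. By construction $(a,b) \in E_{w,w'}$, so $\langle x, y\rangle = \langle w(a,b), w'(a,b)\rangle$ is dense, as required. The only real content of the argument is Lemma \ref{singlewords}; the passage to Theorem \ref{generic strongly dense} is a bookkeeping step, and I do not anticipate any obstacle beyond making sure the countable indexing is correctly set up.
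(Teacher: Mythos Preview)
Your proposal is correct and matches the paper's approach: the paper derives Theorem \ref{generic strongly dense} directly from Lemma \ref{singlewords} by noting that a countable intersection of generic sets is generic, exactly as you do. You are slightly more explicit in separating out the freeness condition via the word varieties $\mathcal{V}_w$, but this is the same bookkeeping the paper sketches in the introduction and in Step 1.
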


In view of Lemma \ref{uncountable} below, we see that Theorem \ref{generic strongly dense} implies Theorem \ref{sec3-key-prop}.

\emph{Step 2:} reduction to the simple case. In this step, we show that Lemma \ref{singlewords} for semisimple $\G$ follows from the special case in which $\G$ is simple.
\vspace{11pt}

\emph{Step 3:} examination of the closure $\overline{\langle w(a,b), w'(a,b)\rangle}$ of $\langle w(a,b), w'(a,b)\rangle$, where $w, w' \in F_2$ are noncommuting words. This is a key step towards the proof of Lemma \ref{singlewords}. In this step, making use of Borel's result on dominance of word maps (Proposition \ref{borel-words-prop}), we prove a weaker variant of Theorem \ref{generic strongly dense}, namely that for a generic set of $(a,b) \in \G(k) \times \G(k)$, $w(a,b)$ and $w'(a,b)$ generate a group which is either (i) dense in $\G(k)$ or (ii) contained in a maximal, connected, semisimple proper subgroup $\H < \G(k)$ with $\rk(\H) = \rk(\G)$. We note that if $\G$ is a special linear group $\SL_n$, or a direct product of such groups, then we are already done at this point. This is because $\SL_n$ has no proper maximal connected semisimple subgroups of maximal rank. \vspace{11pt}

\emph{Step 4:} understanding degenerations of subgroups. To complete the proof of  Lemma \ref{singlewords}, we must rule out option (ii) in Step 3, or at least show that option (i) occurs generically. This is by far the hardest part of the argument, but the basic strategy is not too hard to describe. Suppose for contradiction that option (i) of Step 3 does not occur generically. By another fairly simple ``Baire category'' argument and Borel's result
about the dominance of the word map, there must be a single connected, semisimple proper subgroup $\H < \G(k)$ of the same rank as $\G$ and a pair of noncommuting words $w,w' \in F_2$ such that, for a generic set of $(a,b) \in \G(k) \times \G(k)$, $w(a,b)$ and $w'(a,b)$ lie together in a conjugate $\H^g := g\H g^{-1}$ of $\H$ ($g$ may depend on $a$ and $b$). In particular the pair $(w(a,b), w'(a,b))$ lies in the closure
\[ \overline{\bigcup_{g \in G} (\H^g \times \H^g)}\] for \emph{every} pair $a,b \in \G(k)$.

Now we make an important definition.  Call a proper closed semisimple subgroup $\H'$ a \emph{degeneration} of $\H$ in $\G$ if there exists a pair
\[ (u,v) \in \overline{\bigcup_{g \in G} (\H^g \times \H^g)}\] such that the closure of the group generated by $u$ and $v$ is $\H'$. It will follow from the induction-on-dimension hypothesis that given any proper closed semisimple subgroup $\H'$, there  exist $a,b \in \H'$ such that $u := w(a,b)$ and $v := w'(a,b)$ generate a dense subgroup of $\H'$. In particular, every such subgroup $\H'$ will be a degeneration of $\H$.  On the other hand, in Section \ref{sec: degenerations}, we will show (by a somewhat tedious case check) that for every connected semisimple proper subgroup $\H < \G(k)$, there exists a proper closed semisimple subgroup $\H'$ which is \emph{not} a degeneration of $\H$.  This contradiction completes the proof of Theorem \ref{sec3-key-prop}.

For sake of reference, we now state the main result from Section \ref{sec: degenerations} mentioned above formally.

\begin{proposition}\label{degeneration-prop}
Suppose that $\G(k)$ is a simple algebraic group over an uncountable algebraically closed field $k$. 
 Let $\H < \G(k)$ be a proper connected semisimple maximal subgroup
of $\G$ with $\rk(\H)=\rk(\G)$. 
Assume that if $\ch(k)=3$, $\G(k)$ is not $C_2$.   
 Then there is a proper closed semisimple subgroup $\H' < \G(k)$ such that $\H'$ has no factor of type $C_2$, and with the property that if
\[ (u,v) \in \overline{\bigcup_{g \in G} (\H^g \times \H^g)} \] then the closure of the group generated by $u$ and $v$ is  not $\H'$.
\end{proposition}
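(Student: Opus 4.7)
The plan is to proceed by a case analysis based on the Borel--de Siebenthal classification of maximal rank, maximal connected semisimple subgroups of a simple algebraic group $\G$. For each simple type, these subgroups form a short explicit list, essentially parameterized by the extended Dynkin diagram: removing one node yields the Dynkin diagram of a pseudo-Levi subgroup $\H = Z_\G(s)^\circ$ for some semisimple element $s$ of finite order. For each such $\H$ and each simple $\G$ (other than $C_2$ in characteristic $3$), I will exhibit a concrete proper closed semisimple subgroup $\H' < \G$ with no $C_2$ factor and show that it fails to be a degeneration of $\H$.

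The unifying idea is to exploit the non-trivial algebraic constraint forced by \emph{simultaneous} conjugation into $\H$. Although $\H$ contains a maximal torus, so that $\overline{\bigcup_g \H^g} = \G$ at the level of single elements, a pair $(u,v)$ simultaneously conjugate into $\H = Z_\G(s)^\circ$ must be jointly centralized by a conjugate $s'$ of $s$. Passing to the closure, this property persists in the sense that every $(u,v) \in \overline{\bigcup_{g \in \G}(\H^g \times \H^g)}$ is jointly centralized by some element $s'$ in the (closed) conjugacy class of $s$, up to issues of connected components and unipotent degenerations that can be controlled separately. Consequently, if we choose $\H' < \G$ to be a proper semisimple subgroup whose centralizer in $\G$ lies in $Z(\G)$ --- for instance a principal $\SL_2$, or another suitably chosen irreducibly embedded semisimple subgroup --- then for any pair $(u,v) \in \H' \times \H'$ generating a Zariski-dense subgroup of $\H'$, the only elements of $\G$ centralizing both $u$ and $v$ would lie in $Z(\G)$. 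Since $s \notin Z(\G)$ (as $\H$ is proper in $\G$), the conjugacy class of $s$ is disjoint from the centre, so no such $s'$ exists. This forces the desired conclusion that $\H'$ is not a degeneration of $\H$.

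The main obstacle is then a somewhat tedious case-by-case verification that such an $\H'$ exists in every simple $\G$ (outside the stated exception) and for every Borel--de Siebenthal $\H$: one must check that a proper semisimple subgroup with trivial centralizer modulo $Z(\G)$, and without a $C_2$ factor, can be realized inside $\G$. Classical groups $A_n$, $B_n$, $C_n$, $D_n$ are handled uniformly using well-known irreducible embeddings (and for type $A_n$ the result is vacuous, since there are no proper maximal rank maximal connected semisimple subgroups); exceptional groups $G_2, F_4, E_6, E_7, E_8$ require examination of the tables of maximal rank subgroups, with particular care in characteristics $2$, $3$ and $5$, where the principal $\SL_2$ can fail to have the expected centralizer and must be replaced by an alternative embedding such as a small irreducible copy of $A_2$ or $G_2$. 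The genuine exception of $C_2 = \Sp_4$ in characteristic $3$ arises precisely because the exceptional isogeny of $B_2$ and $C_2$ in that characteristic produces an additional exotic maximal rank subgroup for which no valid choice of $\H'$ can be made, reflecting a true obstruction to the induction at this step.
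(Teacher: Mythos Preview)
Your approach is genuinely different from the paper's, and the central claim has a real gap that you have not filled.

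You assert that every $(u,v)\in\overline{W_2(\H)}$ is jointly centralized by some $s'$ in the conjugacy class $C_s$ of $s$ (where $\H=Z_\G(s)^\circ$). This amounts to saying that the image of the projection
\[
\pi\colon Z:=\{(u,v,t)\in \G\times\G\times C_s : [u,t]=[v,t]=1\}\longrightarrow \G\times\G
\]
is closed, since $W_2(\H)\subseteq \pi(Z)$ is immediate but you need $\overline{W_2(\H)}\subseteq \pi(Z)$. However, $C_s\cong \G/Z_\G(s)$ is an affine (not complete) variety whenever $s\notin Z(\G)$, so $\pi$ is not a proper morphism and its image is only constructible, not closed in general. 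Your hedge ``up to issues of connected components and unipotent degenerations that can be controlled separately'' does not address this: since $s$ is semisimple, $C_s$ is already closed and there are no unipotent degenerations of $s$; the obstruction is rather that the centralizing element may ``escape to infinity'' as the pair $(u,v)$ specializes. Concretely, in a classical group with $\H$ the stabilizer of a nondegenerate $d$-space, pairs in $W_2(\H)$ stabilize a common nondegenerate $d$-space, but in the closure the common invariant $d$-space can become totally isotropic, and then no conjugate of $s$ (whose eigenspaces are nondegenerate) will centralize the pair. So without further argument your key implication fails.

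The paper circumvents this entirely by replacing your centralizer condition with closed \emph{representation-theoretic} invariants: the existence of a common invariant subspace of given dimension on a $\G$-module (closed because Grassmannians are complete), and inequalities on the multiset of composition-factor dimensions (closed via polynomial identities of matrix algebras). These give necessary conditions on degenerations that are genuinely Zariski-closed, and then for each $\H$ one exhibits an $\H'$ violating one of them on a well-chosen module. For classical groups the paper proves more: every degeneration of $\H$ is either conjugate into $\H$ or lies in a parabolic (Theorem \ref{classical-degeneration}); the latter alternative is precisely the phenomenon that breaks your centralizer argument. For exceptional groups the paper compares composition-factor dimensions on small modules case by case.

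Finally, your explanation of the $C_2$ exception is incorrect. There is no exceptional isogeny for $B_2\cong C_2$ in characteristic $3$ (that isogeny is a characteristic-$2$ phenomenon). The actual reason, as the paper notes, is that in characteristic $3$ every proper semisimple subgroup of $\Sp_4$ is conjugate into the $A_1\times A_1$ subgroup (in particular the irreducible principal $A_1$, coming from $\mathrm{Sym}^3$ of the standard $\SL_2$-module, does not exist), so no candidate $\H'$ is available.
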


Note that $C_2(k)$ with $\ch(k)=3$ is a true exception to the result above.   The only semisimple subgroups of $C_2(k)$
in characteristic $3$ are a conjugacy class of $A_1 \times A_1$ and two conjugacy classes of $A_1$.  However, the latter two subgroups can be shown to be conjugate to a subgroup of the $A_1 \times A_1$ subgroup.

We now turn to the task of providing further details for each of the four steps above, starting with \emph{Step 1}, the deduction of Theorem \ref{sec3-key-prop} from Lemma \ref{singlewords}. Since there are only countably many words in $F_2$, and a countable intersection of generic sets is manifestly generic, Lemma \ref{singlewords} immediately implies that for generic $(a,b) \in \G(k) \times \G(k)$, $w(a,b)$ and $w'(a,b)$ generate a dense subgroup of $\G(k)$ for all noncommuting pairs of words $w, w'$. To deduce Theorem \ref{sec3-key-prop} from this, it suffices to show that a generic set is nonempty. This is a standard observation, but for completeness we include a proof.

\begin{lemma} \label{uncountable}
 Let $V$ be an irreducible algebraic variety over an uncountable algebraically closed field $k$.  Then every generic subset of $V$ is non-empty.
\end{lemma}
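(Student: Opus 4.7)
The plan is to prove the contrapositive: no countable union $\bigcup_{n \ge 1} W_n$ of proper closed subvarieties of $V$ can exhaust $V$. The argument proceeds by two reductions followed by a direct polynomial-avoidance argument on $\mathbb{A}^d$.

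First I would reduce to the affine case. Pick any nonempty affine open $U \subseteq V$. Since $V$ is irreducible, $U$ is irreducible of the same dimension $d = \dim V$. Each $W_n \cap U$ is closed in $U$, and is proper in $U$: if $W_n \cap U = U$ then taking closures in $V$ would give $W_n \supseteq \overline{U} = V$, contradicting $W_n \subsetneq V$. So it suffices to find a point of $U$ avoiding every $W_n \cap U$.

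Next I would reduce to $V = \mathbb{A}^d$ via Noether normalization: there is a finite surjective morphism $\pi \colon V \to \mathbb{A}^d$. Since $V$ is irreducible of dimension $d$ and each $W_n$ is a proper closed subvariety, $\dim W_n < d$, and so $\dim \pi(W_n) \leq \dim W_n < d$. Because $\pi$ is finite (hence closed), $\pi(W_n)$ is a proper closed subvariety of $\mathbb{A}^d$. A point of $\mathbb{A}^d(k)$ avoiding every $\pi(W_n)$ has, by surjectivity, a preimage in $V(k)$, and that preimage avoids every $W_n$.

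Finally I would handle $V = \mathbb{A}^d$. Every proper closed subvariety is contained in a hypersurface $\{f = 0\}$ for some nonzero $f \in k[x_1,\ldots,x_d]$, so it is enough to show that given any sequence of nonzero polynomials $f_1, f_2, \ldots \in k[x_1,\ldots,x_d]$, there exists $(x_1,\ldots,x_d) \in k^d$ with $f_n(x_1,\ldots,x_d) \ne 0$ for every $n$. I would prove this by induction on $d$. For $d=1$ each $f_n$ has finitely many roots, so a countable union of these root sets is countable, and by the uncountability of $k$ there is a point of $k$ avoiding all of them. For $d > 1$, expand $f_n = \sum_j g_{n,j}(x_1,\ldots,x_{d-1})\, x_d^j$ and select one nonzero coefficient $g_n := g_{n,j_n}$ for each $n$. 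By the inductive hypothesis applied to $\{g_n\}_{n \ge 1}$, there exist $x_1,\ldots,x_{d-1} \in k$ with every $g_n(x_1,\ldots,x_{d-1}) \ne 0$; then each $f_n(x_1,\ldots,x_{d-1}, x_d) \in k[x_d]$ is a nonzero univariate polynomial, so the base case supplies an $x_d \in k$ making all of them nonzero simultaneously.

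There is no real obstacle here: the argument is essentially bookkeeping, with the only nontrivial input being the uncountability of $k$, which enters precisely at the base case $d=1$ to ensure that a countable union of finite sets cannot cover $\mathbb{A}^1(k)$.
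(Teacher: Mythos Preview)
Your proposal is correct. Both arguments are inductive and ultimately exploit the uncountability of $k$ at a ``slice'' step, but the organisation differs. The paper works directly with $V \subset k^n$ and inducts on the ambient dimension $n$: it slices $V$ by the hyperplane $x_n = t$ and uses uncountability to choose a $t$ for which all of the $W_j$ drop dimension in the slice, then invokes the inductive hypothesis on each irreducible component of the slice. You instead first flatten the situation via Noether normalisation $\pi\colon V \to \mathbb{A}^d$, pushing the proper closed $W_n$ forward to proper closed subsets of $\mathbb{A}^d$, and then carry out a purely polynomial induction on $d$. Your route buys a cleaner end-game (once in $\mathbb{A}^d$ there are no varieties in sight, only polynomials), at the price of invoking Noether normalisation as an external ingredient; the paper's route is more self-contained but has to manage the irreducible components of the slices of $V$ at each step.
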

\begin{proof}
We may assume without loss of generality that $V \subset k^n$ is an irreducible affine variety.  The case $n \leq 0$ is trivial, so assume inductively that $n \geq 1$ and that the claim has been proven for smaller values of $n$.  We may then reduce to the case when $V$ is not contained in any hyperplane.  In particular, for all but finitely many $t$, the slice $V \cap (k^{n-1} \times \{t\})$ is the finite union of irreducible varieties of dimension exactly one less than that of $V$.  Assume for contradiction that $V$ is the countable union of proper subvarieties $W_1, W_2, \ldots$, which we may of course assume to be non-empty.  We then split $k^n$ as $k^{n-1} \times k$ and note that for each $j$, the slices $W_j \cap (k^{n-1} \times \{t\})$ are either empty, or have dimension strictly less than that of $W_j$ for all but finitely many $t$.  As $k$ is uncountable, we may thus find a $t$ for which all the above conclusions concerning the dimension of $V \cap (k^{n-1} \times \{t\})$ and the $W_j \cap (k^{n-1} \times \{t\})$ hold. The claim then follows from the induction hypothesis.
\end{proof}

\noindent \emph{Remark.} The above lemma continues to hold if $k$ is only assumed to have infinite transcendence degree over its prime field and generic sets are assumed to contain the intersection of at most countably many open subvarieties all defined over a given finitely generated subfield of $k$. See \cite[Lemma 2]{borel-dom}.\vspace{6pt}

We move on now to \emph{Step 2}, the deduction of the semisimple case of Lemma \ref{singlewords} from the simple case. By passing to the universal cover we may assume without loss of generality that $\G = \G_1 \times \dots \times \G_t$ is a product of simple algebraic groups. We will require two ingredients.

\begin{proposition}[Borel]\label{borel-words-prop}
 Let $\G(k)$ be a semisimple algebraic group  over an algebraically closed field.  Then any nontrivial word $w$ in the free group $F_2$ gives rise to a dominant map $\G(k) \times \G(k) \rightarrow \G(k)$.  In particular, if $\Omega$ is a subset of $\G(k)$, $\Omega$ is a generic subset of $\G(k)$ if and only if $\{ (a,b) \in \G(k) \times \G(k): w(a,b) \in \Omega \}$ is a generic subset of $\G(k) \times \G(k)$.
\end{proposition}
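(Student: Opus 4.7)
The proposition has two parts: dominance of the word map, and the equivalence of genericity of $\Omega$ with genericity of its preimage. My plan is to sketch the first part (which is essentially Borel's theorem) and then to derive the second as a formal consequence, with the reverse direction being the part that requires actual argument.

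For the dominance claim, I would follow the inductive strategy already outlined in the introduction. The base step is $\G = \SL_n$, handled by induction on $n$: for $\SL_2$ one checks that any nontrivial word gives a map whose trace is a non-constant regular function, so the word map cannot have image contained in any proper subvariety; the inductive step uses the block embedding $\SL_{n-1} \hookrightarrow \SL_n$ together with conjugation by a generic element to propagate dominance. For a general simple $\G$, Borel produces a connected semisimple subgroup $\H \leq \G$ of maximal rank whose almost-simple factors are all of type $\SL_m$, applies the $\SL$ case factor by factor to obtain dominance of $w$ on $\H$, and then uses that the union of $\G$-conjugates of $\H$ is Zariski dense in $\G$ to promote dominance from $\H$ to $\G$. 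The semisimple case reduces immediately to the simple factors. My plan is to cite \cite{borel-dom, larsen-words} rather than redo this case analysis.

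The characterization of generic sets then splits into two implications. The ``only if'' direction is immediate: if $\Omega^c \subseteq \bigcup_i V_i$ with each $V_i$ a proper closed subvariety of $\G(k)$, then dominance of $w$ forces each $w^{-1}(V_i)$ to be a proper closed subvariety of $\G(k) \times \G(k)$ (otherwise $w$ would factor through $V_i$), so $w^{-1}(\Omega)^c = \bigcup_i w^{-1}(V_i)$ is meager. The ``if'' direction is the step I expect to be the main obstacle, and my plan is as follows. Given $w^{-1}(\Omega^c) \subseteq \bigcup_j W_j$ with each $W_j \subsetneq \G \times \G$ closed, choose an open dense $U \subseteq \G$ over which $w$ has equidimensional fibers of dimension $\dim \G$ (generic flatness). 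For each $y \in \Omega^c \cap U$ the fiber $w^{-1}(y)$ is covered by the countable family $\{W_j\}$; applying Lemma \ref{uncountable} to each of its finitely many irreducible components yields a finite subfamily whose union $W = W_{j_1} \cup \cdots \cup W_{j_r}$ already contains $w^{-1}(y)$. For a fixed such $W$, the locus $T_W := \{y \in U : w^{-1}(y) \subseteq W\}$ is contained in $\overline{w(W)}$, which is a proper subvariety of $\G$ unless $w|_W$ is itself dominant; in the latter case $\dim W < 2\dim \G$ forces the generic fiber of $w|_W$ to have dimension $\dim W - \dim \G \leq \dim \G - 1$, strictly less than $\dim w^{-1}(y) = \dim \G$, so $T_W$ misses an open dense of $\G$ and hence is contained in a proper subvariety. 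Taking the countable union over finite index tuples $(j_1,\ldots,j_r)$ and adjoining $\G \setminus U$ expresses $\Omega^c$ as a countable union of proper subvarieties of $\G(k)$, completing the argument.
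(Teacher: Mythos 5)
The paper's own ``proof'' of Proposition \ref{borel-words-prop} is simply the citation ``See Borel \cite{borel-dom} or Larsen \cite{larsen-words}'' — the authors leave the deduction of the genericity equivalence from dominance entirely implicit. You take essentially the same route for the substantive part (citing Borel/Larsen for dominance rather than reproving it), and then supply the deduction of the ``in particular'' clause that the paper omits. The ``only if'' direction you give is clean and correct: dominance forces each $w^{-1}(V_i)$ to be proper, so preimages of meager sets are meager.

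There is one small gap worth flagging in your ``if'' direction. You invoke Lemma \ref{uncountable} to extract a finite subcover of each irreducible component of the fiber $w^{-1}(y)$ from the countable family $\{W_j\}$, but Lemma \ref{uncountable} requires $k$ to be uncountable, whereas the Proposition is stated over an arbitrary algebraically closed field. Over a countable $k$ (e.g.\ $\overline{\F_p}$ or $\overline{\Q}$) the finite-subcover step fails — a countable irreducible curve really is a countable union of its points. The repair is trivial but should be said: over countable $k$, both $\G(k)$ and $\G(k)\times\G(k)$ are countable sets, so every subset of either is generic (cover the complement by singletons), and the equivalence holds vacuously; the dimension-counting argument is then only needed in the uncountable case, where Lemma \ref{uncountable} is available. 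Also, ``equidimensional fibers'' is slightly more than Chevalley's theorem guarantees (it gives that the maximal component dimension of $w^{-1}(y)$ equals $\dim\G$ over a dense open, not that all components have that dimension), but your dimension comparison only uses the maximal dimension, so no harm results. With the countable-case remark added, the argument is correct.
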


\begin{proof} See Borel \cite{borel-dom} or Larsen \cite{larsen-words}.
\end{proof}

\noindent \emph{Remark.} As we noted earlier, the joint map $(a,b) \mapsto (w(a,b), w'(a,b))$ need not be dominant even if $w$ and $w'$ do not commute.  This defeats a naive attack on Lemma \ref{singlewords}, and forced us to use the argument below based on degenerations
 instead.  However, a complete solution to Problem 2 in the introduction would likely lead, among other things, to a new proof of Lemma \ref{singlewords} and hence Theorem \ref{sec3-key-prop}, which in particular could resolve the case of $C_2$ in characteristic $3$.

An important consequence of Proposition \ref{borel-words-prop} is the following.

\begin{lemma} \label{generic-max-rank}  Let $\G(k)$ be a semisimple algebraic group  over an uncountable algebraically closed field.
If $w$ is a nontrivial word in the free group $F_2$, then the set of pairs $(a,b) \in \G(k) \times \G(k)$ such that $\overline{\langle w(a,b) \rangle}$
is a maximal torus of $\G(k)$ is a generic subset of $\G(k) \times \G(k)$.
\end{lemma}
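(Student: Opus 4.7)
The plan is to reduce everything to a statement about generic elements of $\G(k)$, via Proposition \ref{borel-words-prop}, and then exploit the structure of a maximal torus. Since the word map $(a,b) \mapsto w(a,b)$ is dominant, the preimage $\{(a,b) \in \G(k) \times \G(k) : w(a,b) \in \Omega\}$ is generic as soon as $\Omega \subseteq \G(k)$ is generic. It therefore suffices to show that
\[ \Omega := \{ g \in \G(k) : \overline{\langle g \rangle} \text{ is a maximal torus of } \G(k)\} \]
is generic in $\G(k)$.

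First, I pin down the situation inside a fixed maximal torus. Let $T \leq \G(k)$ be a maximal torus, of dimension $r = \rk(\G)$. For any $t \in T$, the Zariski closure of $\langle t \rangle$ is the intersection of the kernels of those characters $\chi \in X^*(T) \cong \Z^r$ satisfying $\chi(t) = 1$; hence $\overline{\langle t \rangle} = T$ if and only if $\chi(t) \neq 1$ for every non-trivial character $\chi$. Each such $\ker(\chi)$ is a proper closed subgroup of $T$, and since $X^*(T)$ is countable, the set
\[ U := \{ t \in T : \overline{\langle t \rangle} = T\} \]
is the complement in $T$ of a countable union of proper closed subvarieties, i.e.\ generic in $T$.

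Next I transport this to $\G(k)$ via the conjugation map
\[ \phi: \G(k) \times T \to \G(k), \qquad \phi(h,t) := h t h^{-1}. \]
If $t \in U$ then $\overline{\langle \phi(h,t) \rangle} = hTh^{-1}$ is a maximal torus, so $\phi(\G(k) \times U) \subseteq \Omega$; conversely, any $g \in \Omega$ is semisimple (its closure being a torus) and so is conjugate into $T$, and it must in fact lie in $\phi(\G(k) \times U)$, since writing $g = h t h^{-1}$ with $t \in T \setminus U$ would force $\overline{\langle g \rangle} \subsetneq h T h^{-1}$. Consequently $\G(k) \setminus \Omega$ is contained in $\phi(\G(k) \times (T \setminus U))$ together with the non-semisimple locus of $\G(k)$, which is a proper closed subvariety.

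Finally, writing $T \setminus U = \bigcup_n V_n$ as a countable union of proper closed subvarieties $V_n \subsetneq T$, I invoke a dimension count on $\phi$. The standard description of conjugacy classes of regular semisimple elements via the Weyl group $W = N_\G(T)/T$ shows that the generic fibre of $\phi$ has dimension $r$: over a regular semisimple $hth^{-1}$, the fibre is a finite union (indexed by $W$) of cosets of $Z_\G(t) = T$. Restriction to $\G(k) \times V_n$ can only increase fibre dimensions, so
\[ \dim \phi(\G(k) \times V_n) \leq \dim \G + \dim V_n - r < \dim \G, \]
and each $\phi(\G(k) \times V_n)$ is a constructible subset of $\G(k)$ contained in a proper closed subvariety. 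Taking the union over $n$ shows that $\G(k) \setminus \Omega$ lies inside a countable union of proper closed subvarieties, so $\Omega$ is generic; non-emptiness is automatic by Lemma \ref{uncountable}. The only non-formal step is the fibre-dimension computation for $\phi$, and this is where I expect one has to be careful; everything else is bookkeeping.
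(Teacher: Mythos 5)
Your proof takes essentially the same route as the paper's: reduce via Proposition \ref{borel-words-prop} to showing that the set $\Omega$ of $g\in\G(k)$ with $\overline{\langle g\rangle}$ a maximal torus is generic, then cover its complement by the non-regular-semisimple locus together with countably many conjugation-saturated sets attached to proper closed subgroups of a fixed maximal torus $T$, and bound their dimensions by the same fibre count used in Lemma \ref{W1}. Two small inaccuracies: (a) restricting $\phi$ to $\G(k)\times V_n$ can only \emph{shrink} fibres, not enlarge them -- the bound $\dim\phi(\G(k)\times V_n)\leq\dim\G+\dim V_n-r$ instead follows because every fibre of the restricted map over $hth^{-1}$ still contains the $r$-dimensional set $\{(hs,t):s\in T\}$, using $T\subseteq Z_\G(t)$; and (b) the non-semisimple locus is not itself closed, but it is contained in the proper closed complement of the regular semisimple locus, which is all you need. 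Neither affects the conclusion.
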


\begin{proof}   Fix a maximal torus $T$ of $\G(k)$.  Note that there are only countably many proper subtori $T_i$ of $T$.
For each such torus $T_i$, write $W_i := \bigcup_{g \in \G(k)} T_i^g$.   Then $\overline{W_i}$ is a proper closed subvariety of $\G(k)$, whence
$\bigcup_i  \overline{W_i}$ is a countable union of proper subvarieties and is meagre.  The claim follows.
\end{proof}

\newcommand\Inn{\operatorname{Inn}}
\newcommand\Aut{\operatorname{Aut}}

We return to the discussion of \emph{Step 2} (the reduction to the simple case). Fix noncommuting words $w,w' \in F_2$. Let $\G= \G_1\times \ldots \times \G_t$
where each $\G_i$ is simple.  Let us examine the pairs $a = (a_1,\dots,a_t)$, $b = (b_1,\dots,b_t) \in \G \times \G$. If we assume that Lemma \ref{singlewords} has already been established for simple groups, then for each $i=1,\ldots,t$, it will generically be the case that $w(a_i, b_i)$ and $w'(a_i,b_i)$ generate a dense subgroup of $\G_i$.   By Lemma \ref{generic-max-rank}, generically the closure of $\langle w(a,b) \rangle$ is a maximal torus of $\G$.  Let $H$ be the closure
of $\langle w(a,b), w'(a,b) \rangle$.   Since $H$ contains a maximal torus $T$ of $G$, it contains a maximal torus $T_i$ of $\G_i$.   Since $H$ projects onto
each $\G_i$, it follows that $H$ contains all $\G_i$-conjugates of $T_i$, and hence all semisimple elements in $\G_i$, whence $\G_i \le H$.  Thus, $H = \G$ as required.


We move now to the discussion of \emph{Step 3} of the outline, in which a weaker variant of the key Lemma
 \ref{singlewords} is established. Here is a formal statement of what we prove.

\begin{lemma}[Step 3]\label{step3-lem}
Suppose that $\G(k)$ is a semisimple algebraic group and that $w,w' \in F_2$ are noncommuting words.
Then for generic $(a,b) \in \G(k) \times \G(k)$ the elements $w(a,b), w'(a,b)$ generate a group whose
closure $\overline{\langle w(a,b), w'(a,b)\rangle}$ is infinite and is either \textup{(i)}  $\G(k)$ or \textup{(ii)} a connected proper semisimple subgroup
$\H < \G(k)$ with $\rk(\H) = \rk(\G)$.
\end{lemma}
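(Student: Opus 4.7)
\emph{Plan.} I would start by applying Lemma \ref{generic-max-rank} to the word $w$: for generic $(a,b)\in \G(k)\times\G(k)$, the closure $T(a,b):=\overline{\langle w(a,b)\rangle}$ is a maximal torus of $\G(k)$. Setting $H(a,b):=\overline{\langle w(a,b), w'(a,b)\rangle}$, this immediately gives $T(a,b)\subseteq H(a,b)$, so $H(a,b)$ is infinite and its identity component $H(a,b)^\circ$ contains a maximal torus. By the standard fact that a connected closed subgroup of a reductive group containing a maximal torus is itself reductive (the Borel--de Siebenthal picture), $H(a,b)^\circ$ is a reductive subgroup of maximal rank whose root system is a closed subsystem of the root system of $\G$. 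In particular, there are only finitely many conjugacy classes of proper such subgroups.

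\emph{Main step.} It remains to upgrade ``reductive of maximal rank'' to ``semisimple'' and to show that $H(a,b)$ is connected, generically. Both potential failures can be read off from where $w'(a,b)$ sits: if $H(a,b)^\circ$ is not semisimple, then its central torus $S:=Z(H(a,b)^\circ)^\circ\subseteq T(a,b)$ is nontrivial and $H(a,b)^\circ\subseteq C_\G(S)$, a proper Levi subgroup containing $T(a,b)$, so $w'(a,b)\in N_\G(C_\G(S))$; while if $H(a,b)\neq H(a,b)^\circ$, then $w'(a,b)\in N_\G(H(a,b)^\circ)\setminus H(a,b)^\circ$. In each case $w'(a,b)$ is constrained to a proper closed subvariety $X=X(T(a,b))$ of $\G(k)$ depending on $T(a,b)$, and because Borel--de Siebenthal provides only finitely many conjugacy classes of proper Levis and of proper reductive max-rank subgroups, the bad locus is a finite union of such constraints.

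\emph{Main obstacle.} The principal difficulty is that the joint word map $(a,b)\mapsto(w(a,b),w'(a,b))$ is not dominant in general (precisely the content of the remark following Proposition \ref{borel-words-prop}), so I cannot simply pull back a proper closed condition on $\G\times\G$. My strategy is to use the individual dominance of the word maps $w$ and $w'$ (Proposition \ref{borel-words-prop}) together with Lemma \ref{generic-max-rank} applied to auxiliary words such as the conjugate $w'ww'^{-1}$, whose associated generic torus is $w'(a,b)T(a,b)w'(a,b)^{-1}$. Generically this second torus is again a maximal torus of $\G$ sitting inside $H(a,b)^\circ$, and distinct from $T(a,b)$; meanwhile the dominance of $w'$ allows one to place $w'(a,b)$ in generic position with respect to $T(a,b)$. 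Using these two inputs one should show that $w'(a,b)$ avoids each of the finitely many proper closed loci $N_\G(L)$ (for Levis $L\supset T(a,b)$) or $N_\G(\H_0)\setminus\H_0$ (for proper reductive max-rank $\H_0\supset T(a,b)$), so the bad locus is meagre. Packaging this genericity argument—showing that the set of good $(a,b)$ contains the complement of a countable union of proper closed subvarieties—is where I expect the real work to lie.
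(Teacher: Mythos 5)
There is a genuine gap, and one step of your argument is false as stated.

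First, the claim that ``a connected closed subgroup of a reductive group containing a maximal torus is itself reductive'' is not correct: a Borel subgroup contains a maximal torus but is solvable, not reductive. Borel--de~Siebenthal describes the connected \emph{reductive} subgroups of maximal rank (generated by the torus and root subgroups of a closed subsystem), but it does not say that every connected subgroup containing a maximal torus is reductive. What is actually available, and what the paper uses, is the Borel--Tits dichotomy (\cite[3.1.3]{gls3} or \cite[10.4]{humphreys}): a connected closed subgroup of $\G$ is either reductive or is contained in a proper parabolic subgroup $P = LU$.

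Second, you correctly identify the main obstacle (non-dominance of the joint word map) but do not resolve it, and the route you gesture at (conjugate words like $w'ww'^{-1}$, counting conjugacy classes of Levis and normalizers) is more complicated than necessary. The paper's resolution is to apply Lemma~\ref{generic-max-rank} not only to $w$ and $w'$ but also to the \emph{commutator} $[w,w']$, which is a nontrivial word precisely because $w$ and $w'$ do not commute. Generically all three of $\overline{\langle w(a,b)\rangle}$, $\overline{\langle w'(a,b)\rangle}$, $\overline{\langle [w(a,b),w'(a,b)]\rangle}$ are maximal tori. Writing $\H$ for the closure of $\langle w(a,b),w'(a,b)\rangle$, this single additional fact does all the work: (a)~$\H$ is the closure of the group generated by the two maximal tori $\overline{\langle w(a,b)\rangle}$ and $\overline{\langle w'(a,b)\rangle}$, hence connected and of full rank; (b)~$[\H,\H]$ contains the maximal torus $\overline{\langle[w(a,b),w'(a,b)]\rangle}$, so $\rk([\H,\H])=\rk(\G)$, which rules out a nontrivial central torus (giving semisimplicity if $\H$ is reductive) and also rules out $\H\subseteq P$ for a proper parabolic $P=LU$, since $[\H,\H]$ would then map with unipotent kernel into $[L,L]$, whose rank is strictly less than $\rk(\G)$. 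No separate meagreness argument about $w'(a,b)$ avoiding normalizers is needed; the genericity required is exactly three applications of Lemma~\ref{generic-max-rank}.
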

\begin{proof}
By Lemma \ref{generic-max-rank}, it is clear that the closure of $\langle w(a,b), w'(a,b)\rangle$, which contains the closure of $\langle w(a,b)\rangle$,
generically contains a maximal torus and hence is infinite. If, for such a pair $(a,b)$, this group is not all of $\G(k)$,
it is therefore contained in a maximal closed proper subgroup $H < \G(k)$. By the observation above,
it is generically the case that all three of $w(a,b), w'(a,b)$ and $[w(a,b),w'(a,b)]$ generate subgroups whose closures are maximal tori.
It suffices to show that, in this case, $\H$ is connected, semisimple and has $\rk(\H) = \rk(\G)$.

The statement about ranks is immediate, since $\H$ contains a maximal torus in $\G(k)$.
$\H$ is generated by two maximal tori, which in particular are connected, whence $\H$ is connected.

By a theorem of Borel and Tits (see \cite[3.1.3]{gls3} or \cite[10.4]{humphreys}), any closed subgroup $\H < \G(k)$ is either reductive or is contained in a maximal parabolic subgroup. In the former case the fact that $\rk ([\H,\H]) = \rk(\G)$ implies that $\H$ has no central torus, and hence is semisimple and we are done. In the latter case, suppose that $\H$ is contained in a maximal parabolic subgroup $P$ with Levi decomposition $P = LU$, where $L$ is a Levi subgroup and $U$ is the unipotent radical.  The Levi subgroup is generated
by a maximal torus and a collection of root subgroups  corresponding to a proper subset
of the simple roots.     Thus, the semisimple rank of $L$ is the cardinality of this set of roots
and in particular is less than the rank of $\G$.
\end{proof}

Finally, we turn to the analysis of \emph{Step 4} of the outline, which consists in bootstrapping Lemma \ref{step3-lem} (for simple $\G$) to the stronger statement of Lemma \ref{singlewords}.

It suffices to show that the set of pairs $(a,b) \in \G(k) \times \G(k)$ for which $w(a,b), w'(a,b)$ generate a group whose closure is some connected, proper semisimple $\H < \G(k)$ with $\rk(\H) = \rk(\G)$ is meagre. To study this further, we will use the following facts.

\begin{enumerate}
\item There are only finitely many conjugacy classes of maximal, connected, proper, semisimple $\H < \G(k)$ with $\rk(\H) = \rk(\G)$;
\item For each such $\H$ the set
\[ \bigcup_{g \in \G(k)} (\H^g \times \H^g)\] is \emph{constructible}, that is to say is a finite boolean combination of open and closed sets in $\G(k)$.
\end{enumerate}

Fact (i) is well-known and we refer the reader to the table of Section \ref{sec: degenerations} below for a complete list of maximal semisimple subgroups of maximal rank up to conjugacy and to \cite{borel-de-siebenthal,liebeck,liebeck-seitz} for a discussion and proof of this fact.
Fact (ii) is a trivial consequence of the well-known property of the class of constructible sets that it is stable under algebraic morphisms (see \cite{borel-alggroups} for a proof of this property).

Since a finite union of meagre sets is meagre, fact (i) implies that we may focus attention on a single conjugacy class of $\H$. Thus it suffices to show that, for each \emph{fixed} $\H$, the set $X$ of pairs $(a,b) \in \G(k) \times \G(k)$ for which
\[ (w(a,b), w'(a,b)) \in \bigcup_{g \in \G(k)} (\H^g \times \H^g)\] is meagre.
The set $X$, being the preimage of a constructible set under an algebraic map, is itself constructible. But every constructible set is either meager or generic. Indeed, write $1_X$ in a minimal fashion as a finite $\pm 1$ combination of characteristic functions $1_S$ of closed sets. If $S = \G(k)\times \G(k)$ is not one of these sets then $X$ is meagre. Otherwise, $X$ contains the complement of finitely many closed subsets of $\G(k)\times \G(k)$ and in particular is generic and thus dense. It follows that for \emph{every} pair $(a,b) \in \G(k) \times \G(k)$ we have
\begin{equation}\label{to-contradict} (w(a,b), w'(a,b)) \in \overline{\bigcup_{g \in \G(k)} (\H^g \times \H^g)}\end{equation}

We then invoke Proposition \ref{degeneration-prop} to obtain an auxiliary proper, closed, semisimple subgroup $\H' < \G(k)$ with the property that there does not exist
\[ (u,v) \in \overline{\bigcup_{g \in \G} (\H^g \times \H^g)}\] such that $\overline{\langle u, v\rangle} = \H'$. As already mentioned, the existence of such an $\H'$ is established in Section \ref{sec: degenerations}. By the main induction hypothesis (in the proof of Theorem \ref{sec3-key-prop}) there are $a,b \in \H' \subseteq \G(k)$ such that $\overline{\langle  w(a,b), w'(a,b)  \rangle} = \H'$. This contradicts \eqref{to-contradict} and the choice of $\H'$.

This concludes the proof of Theorem \ref{sec3-key-prop}, except of course for the construction of $\H'$.

\vspace{11pt}

\section{Degenerations of maximal rank semisimple subgroups} \label{sec: degenerations}

The purpose of this section is to prove  Proposition \ref{degeneration-prop}, and thus complete the proof of Theorem \ref{sec3-key-prop}. In the process of doing so, we will also get some information on degenerations of maximal rank subgroups of classical simple algebraic groups which may be of independent interest. The definition of \emph{degeneration} was given just before the statement of Proposition \ref{degeneration-prop}; we formalise it now.

Let $\G$ a simple algebraic group over an algebraically closed field $k$ of rank $r$.   We assume
that $k$ is not algebraic over a finite field.
If $\H$ is a closed subgroup of $\G$,  let \[ W_1(\H) := \bigcup_{g \in \G} \H^g\] and
\[ W_2(\H):= \bigcup_{g \in \G} (\H^g \times \H^g).\]

\begin{definition}[Degenerations] \label{def:degen}  Let $\H$ and $\H'$ be a closed subgroups of  $\G$.
We say that $\H'$ is a \emph{degeneration} of $\H$ \textup{(}with respect to $\G$\textup{)} if there exist $(u,v)  \in \overline{W_2(\H)}$ with
$\langle u, v \rangle$ dense in $\H'$.
\end{definition}

Our aim in this section is to prove Proposition \ref{degeneration-prop}. This is the assertion that, given any proper connected semisimple subgroup $\H \leq \G$ with $\rk(\H) = \rk(\G)$,  there exists another proper semisimple subgroup $\H'$ that is not a degeneration of $\H$.

We make some remarks on $W_1(\H)$ and $W_2(\H)$. First of all we note that in general,  $W_1(\H)$ is not a closed subvariety of $\G$ and hence $W_2(\H)$ certainly need not be a closed subvariety of $\G \times \G$.
For example, if $\H$ is a subgroup of maximal rank,  then $W_1(\H)$ contains all semisimple elements and therefore its closure
is $\G$ (and rarely is every element of $\G$ conjugate to an element of a proper subgroup).   Similarly, if $\G= \SL_{2n}$ and $\H=\Sp_{2n}$,
then $\H$ contains a regular unipotent element  of $\G$ and so the closure of $W_1(\H)$ contains all unipotent elements of $\G$ (and again,
$\H$ will not contain conjugates of all unipotent elements).

There is, however, one case in which $W_2(\H)$ \emph{is} closed.

\begin{lemma} \label{parabolic}   If $\H$ is a parabolic subgroup of $\G$, then $W_2(\H)$ is closed.
\end{lemma}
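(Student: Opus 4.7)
The plan is to exploit the defining property of a parabolic subgroup, namely that the quotient variety $\G/\H$ is complete (projective). Completeness of $\G/\H$ makes the second projection $\G \times \G \times \G/\H \to \G \times \G$ a closed map, and I will realise $W_2(\H)$ as the image of a closed subset under this map.

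\textbf{Step 1: Set up an incidence variety.} First I would introduce
\[
Z' := \{(x,y,g) \in \G \times \G \times \G : g^{-1}xg \in \H \text{ and } g^{-1}yg \in \H\}.
\]
This is closed in $\G \times \G \times \G$, being the preimage of the closed set $\H \times \H$ under the regular morphism $(x,y,g) \mapsto (g^{-1}xg, g^{-1}yg)$. Note that $Z'$ is stable under the right $\H$-action $h \cdot (x,y,g) = (x,y,gh)$, since the conditions $g^{-1}xg \in \H$ and $g^{-1}yg \in \H$ are preserved by replacing $g$ with $gh$.

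\textbf{Step 2: Descend to $\G/\H$.} Because $\G \to \G/\H$ is a (locally trivial) $\H$-bundle, the $\H$-invariant closed subvariety $Z'$ descends to a closed subvariety
\[
Z \subseteq \G \times \G \times (\G/\H),
\]
characterised by $((x,y),g\H) \in Z$ iff $x,y \in g\H g^{-1} = \H^g$. (The condition depends only on the coset $g\H$ because $\H$ is self-normalising in the sense that conjugation by elements of $\H$ preserves $\H$.)

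\textbf{Step 3: Apply completeness of $\G/\H$.} Since $\H$ is parabolic, $\G/\H$ is a complete variety. Consequently the projection $\G \times \G \times (\G/\H) \to \G \times \G$ is a closed morphism (this is the universal property of completeness: $X \times Y \to X$ is closed for all $X$ whenever $Y$ is complete). Therefore the image of $Z$ under this projection is closed in $\G \times \G$. But by construction this image is precisely
\[
\{(x,y) \in \G \times \G : x, y \in \H^g \text{ for some } g \in \G\} \;=\; W_2(\H),
\]
which proves the lemma.

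\textbf{Where the work lies.} No step is genuinely hard; the only point requiring a little care is the descent in Step 2, i.e.\ verifying that the $\H$-invariant closed set $Z'$ gives a closed set $Z$ in $\G \times \G \times (\G/\H)$. This follows from the fact that $\G \to \G/\H$ is a quotient morphism in the Zariski topology (in fact a Zariski-locally trivial principal $\H$-bundle for $\H$ parabolic), so closedness of $Z$ can be checked on a trivialising open cover and reduces to the closedness of $Z'$. The rest is just the standard ``closed image under a proper projection'' argument enabled by the completeness of the flag variety $\G/\H$.
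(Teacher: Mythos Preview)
Your proof is correct and follows essentially the same strategy as the paper: realise $W_2(\H)$ as the image of a closed incidence variety in $\G \times \G \times (\G/\H)$ under projection, and use completeness of $\G/\H$. The only difference is cosmetic: the paper defines the incidence variety directly as $Y = \{(a,b,x) \in \G \times \G \times (\G/\H) : ax = bx = x\}$, which is manifestly closed as the preimage of a diagonal under a morphism, thereby bypassing your Step~2 descent argument entirely.
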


\begin{proof}  Let $X=\G/\H$.  By definition \cite[11.2]{borel-alggroups} since $\H$ is a parabolic subgroup,  $X$ is a complete variety.
Let $f:\G \times \G \times X \rightarrow \G \times \G$ be the natural projection.   Since $X$ is complete,
$f$ is a closed map \cite[GM 7.4]{borel-alggroups}.    Let \[ Y=\{(a,b,x) \in \G \times \G \times X \, | \, ax=bx=x\}.\]  Then $f(Y)$ is closed.
Since $f(Y)=W_2(\H)$, the result follows.
\end{proof}

We will give some necessary conditions for $\H'$ to be a degeneration of $\H$. Then we will use these conditions
to verify Proposition  \ref{degeneration-prop}.

First we note the following lemma.

\begin{lemma} \label{W1}  Suppose that $\H$ is a connected reductive subgroup of rank $s$.   Then $\dim \overline{W_1(\H)}  \le \dim \G - (r-s)$.
\end{lemma}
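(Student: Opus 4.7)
The plan is to apply the fibre dimension theorem to the conjugation morphism
\[
\phi : \G \times \H \longrightarrow \G, \qquad (g,h) \longmapsto g h g^{-1},
\]
whose image is exactly $W_1(\H)$. Note that $\phi$ is equivariant for the action of $\G$ given by left multiplication on the first factor and by conjugation on the target, so fibre dimensions are constant on $\G$-conjugacy classes. Since every element of $W_1(\H)$ is by definition $\G$-conjugate to some element of $\H$, the minimum fibre dimension of $\phi$ over $\overline{W_1(\H)}$ is attained at some $h_0 \in \H$; combined with upper semicontinuity of fibre dimension on the target, this further implies that a generic $h_0 \in \H$ realises this minimum. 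To deduce $\dim \overline{W_1(\H)} \leq \dim \G - (r-s)$, it therefore suffices to establish
\[
\dim \phi^{-1}(h_0) \geq \dim \H - s + r
\]
for a generic $h_0 \in \H$.

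Next I would take $h_0$ to be a regular semisimple element of $\H$, lying in a maximal torus $T$ of $\H$. Projection onto the first factor identifies $\phi^{-1}(h_0)$ with the set $\{ g \in \G : g^{-1} h_0 g \in \H \}$. Inverting, this becomes the preimage of $\H \cap \G \cdot h_0$ under the orbit morphism $\psi : \G \to \G \cdot h_0$, $g \mapsto g h_0 g^{-1}$, whose fibres are cosets of $C_\G(h_0)$. Hence
\[
\dim \phi^{-1}(h_0) = \dim(\H \cap \G \cdot h_0) + \dim C_\G(h_0).
\]

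Finally I would lower-bound each of the two terms on the right. Since $\H \cap \G \cdot h_0$ contains the $\H$-orbit of $h_0$, and $h_0$ is regular in $\H$ so that $\dim C_\H(h_0) = s$, the intersection has dimension at least $\dim \H - s$. And since $h_0$ is semisimple, $C_\G(h_0)$ is reductive of rank $r$ (it contains any maximal torus of $\G$ through $h_0$), so $\dim C_\G(h_0) \geq r$. Together these yield the required bound on $\dim \phi^{-1}(h_0)$, and the fibre dimension theorem then gives the lemma. The one delicate point in this plan is the transition from a lower bound on $\dim \phi^{-1}(h_0)$ at a single (generic) $h_0 \in \H$ to a lower bound on the generic fibre dimension of $\phi$ over its image closure; this is exactly where the $\G$-equivariance, together with upper semicontinuity, does the work by reducing the minimum over $\overline{W_1(\H)}$ to the minimum over $\H$.
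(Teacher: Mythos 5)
Your proof is correct in substance, but it takes a route that is genuinely different from the paper's and, as you yourself flag, passes through a slightly delicate genericity issue that the paper's argument sidesteps entirely. The paper first replaces $\H$ by a maximal torus $S$ of $\H$, noting that since $\H$ is reductive its semisimple elements are dense, so $\overline{W_1(\H)} = \overline{W_1(S)}$. It then considers the conjugation morphism $f\colon \G\times S \to \G$ (of source dimension $\dim\G + s$) and observes that \emph{every} fibre has dimension at least $r$: for any point $(g,s)$ in a fibre, that fibre contains the whole coset $gC_\G(s)\times\{s\}$, and $\dim C_\G(s)\geq r$ for every $s\in S$ because $S$ sits inside a maximal torus of $\G$. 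Since the bound holds on every fibre, the fibre dimension theorem applies directly and there is nothing to check about which $h_0$ achieves the generic fibre dimension.

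Your version keeps all of $\H$ in the source, so the cruder lower bound $\geq r$ is no longer available and you have to extract the extra $\dim\H - s$ by computing the fibre more carefully (decomposing via the orbit map $\psi$ and lower-bounding both $\dim(\H\cap\G\cdot h_0)$ and $\dim C_\G(h_0)$). That computation is fine, but it only gives the bound for a regular semisimple $h_0$, and you then need to know that some such $h_0$ realises the minimum fibre dimension. Your appeal to ``upper semicontinuity of fibre dimension on the target'' is not quite a standard statement (that form of semicontinuity holds on the source, or on the target for proper morphisms, which $\phi$ is not). What actually closes the gap is the $\G$-equivariance you already invoke: the set of $y$ where the fibre dimension exceeds its minimum is a proper, $\G$-stable, closed subset of $\overline{W_1(\H)}$; since $W_1(\H)$ is dense and $\G$-stable, this bad set cannot contain $\H$, so its intersection with $\H$ is a proper closed subset, which misses the dense open locus of regular semisimple elements. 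Filling that in makes your argument complete. In short: your proof works and is a reasonable alternative, but the paper's reduction to the torus buys a uniform fibre bound and therefore a cleaner, genericity-free deduction.
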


\begin{proof}   Since $\H$ is reductive, the set of semisimple elements of $\H$ contains an open dense subset of $\H$.
Thus, if we let $S$ be a maximal torus of $\H$, $\bigcup_{h \in \H} S^h$ is dense in $\H$ and so
$\overline{W_1(\H)} = \overline{W_1(S)}$.   Consider the morphism $f: \G \times S \rightarrow W_1(\H)$ given by conjugation: $f(g,s) := gsg^{-1}$.  We have seen
that this is a dominant map.  Clearly every fiber has dimension at least $r$ (since we can embed $S$
in a maximal torus of $\G$), and so the result follows.
\end{proof}

We say that a finite-dimensional $k$-vector space $V$ is a \emph{$\G$-module} if it is equipped with a linear rational action of $\G$, that is to say an algebraic homomorphism $\G \rightarrow \GL(V)$.
We now produce some closed varieties associated with such modules which will provide some necessary conditions on degenerations.

\begin{lemma}[Closed invariants]\label{closed invariants}  Suppose that $\G$ is a simple algebraic group over an algebraically closed field $k$, acting on a $\G$-module $V$.  Let $d, M $ be   positive integers.
 \begin{enumerate}
\item    The set of pairs $(a,b) \in \G \times \G$ that have a common $d$-dimensional invariant subspace in $V$ is closed.
\item    The set of pairs $(a,b) \in \G \times \G$ such that
$$
\sum_{ d_i(a,b)  > d}  d_i(a,b)   \leq M
$$
is closed where the $d_i(a,b)$ are the dimensions of the composition factors of $\langle a, b \rangle$ on $V$.
\end{enumerate}
\end{lemma}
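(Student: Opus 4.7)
My plan is to use standard incidence-variety arguments throughout, relying on the fact that Grassmannians and partial flag varieties are projective (hence complete), so that projection from them is a closed map. For part (i), I would form the incidence variety
\[
Z \;:=\; \{ (a, b, W) \in \G \times \G \times \Grass(d, V) : aW = W \text{ and } bW = W \}.
\]
This is closed: the action of $\GL(V)$ on $\Grass(d, V)$ is a morphism, and the condition $aW = W$ is the pullback of the diagonal of $\Grass(d, V) \times \Grass(d, V)$ under $(a, W) \mapsto (aW, W)$. The projection $Z \to \G \times \G$ is closed because $\Grass(d, V)$ is projective, and its image is precisely the set in (i).

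For part (ii), my first step would be to reformulate the composition-factor hypothesis as a flag condition. The key combinatorial claim is that $\sum_{d_i(a,b) > d} d_i(a,b) \leq M$ holds if and only if $V$ admits a chain $0 = W_0 \subsetneq W_1 \subsetneq \cdots \subsetneq W_\ell = V$ of $\langle a,b \rangle$-invariant subspaces with $\sum_{i \,:\, \delta_i > d} \delta_i \leq M$, where $\delta_i := \dim(W_i/W_{i-1})$. The forward direction follows by taking any composition series, so that $\delta_i = d_i$. For the reverse direction, refine the given chain to a composition series: a small jump $\delta_i \leq d$ contributes only composition factors of dimension $\leq d$ (each lies inside a module of dimension $\leq d$), while a large jump $\delta_i > d$ contributes composition factors whose large ones have combined dimension at most $\delta_i$. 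Summing over $i$ then yields $\sum_{d_i > d} d_i \leq \sum_{\delta_i > d} \delta_i \leq M$.

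With this equivalence in hand, closedness follows by the same mechanism as in (i). There are only finitely many integer sequences $0 = n_0 < n_1 < \cdots < n_\ell = \dim V$ satisfying $\sum_{i : n_i - n_{i-1} > d}(n_i - n_{i-1}) \leq M$. For each such sequence the associated partial flag variety is projective, and inside its product with $\G \times \G$ the locus where all the $W_i$ are simultaneously $a$- and $b$-invariant is closed. Projecting to $\G \times \G$ along the flag variety is a closed map, and the union over the finitely many admissible sequences---which equals the set in (ii)---is therefore closed. The main obstacle is the combinatorial reformulation above: composition factors cannot in general be separated into a neat "small-then-large" filtration, because non-split extensions can interleave them; the observation that a weaker flag condition, read off from any composition-series refinement, still controls $\sum_{d_i > d} d_i$ is what makes the geometric machinery applicable, after which both parts reduce to the same "project along a complete variety" device.
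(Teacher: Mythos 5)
Your part (i) is exactly the paper's argument: incidence variety over the Grassmannian, project along the complete factor. Part (ii), however, is a genuinely different proof. The paper's route is algebraic: it takes a polynomial identity $F(A,B)$ (via Amitsur--Levitzky) that vanishes on $\Mat_d(k)$ but not $\Mat_{d+1}(k)$, observes that for $a',b'$ in the algebra generated by $a,b$ the power $F(a',b')^N$ has generic rank $\sum_{d_i>d}d_i$, and concludes that the condition in (ii) is the closed condition ``$F(a',b')^N$ has rank $\leq M$ for all such $a',b'$.'' Your route instead reformulates the Jordan--H\"older statistic as a flag condition and reuses the projection-from-a-complete-variety mechanism from (i). Your combinatorial reduction is correct: from any invariant chain with jumps $\delta_i$, a Jordan--H\"older refinement partitions each jump into composition factors, so small jumps ($\delta_i\leq d$) contribute nothing to $\sum_{d_j>d}d_j$, while each large jump contributes at most $\delta_i$, giving $\sum_{d_j>d}d_j\leq\sum_{\delta_i>d}\delta_i$; the converse is just ``take a composition series.'' The set in (ii) is therefore a finite union, over admissible dimension sequences, of projections of closed incidence loci in $\G\times\G\times\mathrm{Fl}$, hence closed. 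Both proofs are valid. Yours has the virtue of treating (i) and (ii) by one uniform geometric device and avoiding PI theory; the paper's PI argument is shorter once the identity is granted, and the ``rank of $F(a',b')^N$'' formulation is a reusable algebraic invariant, but it does require the slightly delicate ``generically invertible on large factors'' step that your flag argument bypasses.
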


\begin{proof}  Item (i) follows just as in the proof of Lemma \ref{parabolic}, using the fact the Grassmanian is a complete variety.

We now prove (ii).   Let $F(A,B)$ be a polynomial such that $F(A,B)$ does not vanish on
the algebra $\Mat_{d+1}(k)$ of $(d+1) \times (d +1)$ matrices over $k$ but does vanish on $\Mat_d(k)$.   See \cite[\S 1.3]{rowen}
for a discussion of these polynomials.
The existence of such a polynomial  is essentially the  Amitsur-Levitzky theorem (by modifying the standard identity one can get a two variable
identity).       Let $(a,b) \in \G \times \G$.   Choose a composition
series
\[ \{0\}=V_0 \subset V_1 \subset \dots \subset V_m = V\] for $\langle a, b \rangle$ with $d_i = \dim (V_{i+1}/V_i)$.
If $d_i \leq d$, it follows that $F(a', b')= 0$ on $V_{i+1}/V_i$ for every $a', b'$ in the algebra $\mathcal{A}$  generated by $a,b$ (in $\mathrm{End}(V)$).
  If $d_i > d$, it is easy
to see that $F(a', b')$ is generically  invertible on $V_{i+1}/V_i$ for $a', b' \in \mathcal{A}$.  It follows that for $N$ sufficiently large ($N \geq \dim V$
certainly suffices) the rank of $F(a',b')^N$ is generically $\sum_{d_i > d}  d_i$.   Therefore the set described in (ii) is precisely the set of
$(a,b)$ such that $F(a',b')^N$ has rank at most $M$ for all $a', b'$  in the algebra generated by $a,b$, and this is clearly a closed condition.
\end{proof}

This lemma has the following immediate consequence.

\begin{corollary}[Necessary conditions for degeneration]\label{test} Let $\H, \H'$ be   closed subgroups of $\G$.   Let $V$ be a $\G$-module.   Let $d_i, d_i'$ denote
the dimensions of the composition factors of $\H$ and $\H'$, respectively, on $V$.    Suppose that
$\H'$ is a degeneration of $\H$.   Let $d$ be a positive integer.
\begin{enumerate}
\item  If $\H$ has a $d$-dimensional invariant subspace, then so does $\H'$.  In particular, if $\H'$ acts irreducibly on $V$, then so must $\H$.
\item  $ \sum_{1 \leq i \leq r: d_i > d} d_i \geq \sum_{1 \leq i \leq r: d'_i > d} d'_i.$
\end{enumerate}
\end{corollary}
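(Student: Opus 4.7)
The plan is to apply Lemma \ref{closed invariants} directly, using the defining property of a degeneration. By hypothesis, there exists a pair $(u,v) \in \overline{W_2(\H)}$ with $\overline{\langle u, v\rangle} = \H'$. I will verify that the closed conditions supplied by Lemma \ref{closed invariants} hold on every pair in $W_2(\H)$, pass to the closure to obtain them at $(u,v)$, and then transfer them from $\langle u,v\rangle$ to its Zariski closure $\H'$ (using that the stabilizer of any subspace of $V$ is a closed subgroup of $\GL(V)$, so it contains the closure of any subgroup it contains).

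For (i): if $\H$ preserves a subspace $W \subset V$ of dimension $d$, then every conjugate $\H^g$ preserves $g \cdot W$, so every pair in $W_2(\H)$ admits a common $d$-dimensional invariant subspace. By Lemma \ref{closed invariants}(i), this property is closed in $\G \times \G$, hence persists at $(u,v) \in \overline{W_2(\H)}$. The resulting $d$-dimensional subspace is invariant under $\langle u,v \rangle$ and therefore, by the remark above, under $\H'$. The ``in particular'' clause is then the contrapositive.

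For (ii): the main bookkeeping point is to relate $\H$-composition factors to $\langle a,b\rangle$-composition factors. For any $(a,b) \in \H \times \H$, the composition series of $\langle a,b\rangle$ on $V$ refines that of $\H$, so each $\langle a,b\rangle$-composition factor of dimension $e$ is contained in some $\H$-composition factor of dimension $d_i \geq e$, and the $\langle a,b\rangle$-factors inside a given $d_i$ sum in dimension to exactly $d_i$. Setting $M := \sum_{d_i > d} d_i$, the sum of $\langle a,b\rangle$-composition factors of dimension greater than $d$ is therefore at most $M$. The same bound holds on each $\H^g \times \H^g$ by conjugation-invariance, hence on $W_2(\H)$. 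By Lemma \ref{closed invariants}(ii) this is a closed condition, so it survives at $(u,v) \in \overline{W_2(\H)}$, and as $\H'$ and $\langle u,v \rangle$ have the same invariant subspaces (and hence the same composition factors) in $V$, we obtain $\sum_{d_i' > d} d_i' \leq M$, as required.

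There is no serious obstacle here: once Lemma \ref{closed invariants} is in place, the argument is a straightforward two-step passage (first to the closure in $\G \times \G$, then to the Zariski closure of the generated subgroup). The only point needing a little care is the refinement bookkeeping in (ii), which is where the strict inequality ``$>d$'' in the defining closed condition becomes essential.
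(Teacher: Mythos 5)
Your proof is correct and follows the paper's approach: the paper presents Corollary~\ref{test} as an immediate consequence of Lemma~\ref{closed invariants}, and your argument simply spells out the (straightforward) details — conjugation-invariance of the conditions on $W_2(\H)$, passage to the closure $\overline{W_2(\H)}$, the Jordan--H\"older refinement bookkeeping for (ii), and the transfer from $\langle u,v\rangle$ to $\H'=\overline{\langle u,v\rangle}$ via closedness of subspace stabilizers. Nothing to add.
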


We now begin the proof of Proposition \ref{degeneration-prop} for classical groups (in fact we prove a stronger result in that case). After that we go through the exceptional groups one by one.  We use the well-known classification of maximal rank semisimple subgroups for each such group, inspecting their behaviour on a suitable finite dimensional module, and then applying Corollary  \ref{test}.  Broadly speaking, when the group $\G$ is classical we will rely on part (i) of Corollary \ref{test}, whilst for the exceptional algebraic groups we will rely mostly on part (ii).

The connected semisimple maximal rank subgroups of simple algebraic groups are well understood.
In characteristic $0$, these are described in terms of the extended Dynkin diagram (this was originally proved by
Borel and de Siebenthal \cite{borel-de-siebenthal} in the setting of compact groups and is valid in almost all characteristics).
If the characteristic $p > 0$ is positive but small, there are some extra cases; and
for exceptional groups, one can compute the various possibilities by considering subsystems of roots.
We will use the paper \cite[Tables 5.1, 5.2]{lss} as a reference for the maximal rank subgroups in the exceptional groups.
We also refer the reader to \cite{liebeck,liebeck-seitz,lieseitz,seitz-mem} for a complete description of
maximal rank subgroups of simple algebraic groups. For the reader's convenience, we summarize them (up to conjugacy and isogeny)
in the following table.
Note that $D_2 = A_1 \times A_1$, that $B_2=C_2$ when the characteristic is not $2$.  \vspace{6pt}

\noindent\begin{tabular}{|l|l|}
\hline
Simple group of rank $r$ & Maximal semisimple subgroups of rank $r$ \\
\hline
$A_{r},r\geq 1$ & none \\
$B_{r},r\geq 2, p \ne 2 $ & $D_{r},B_{k}\times D_{r-k}$ for $1\leq k  <  r-1$ \\
$C_r, r \geq 2, p =2$  &  $D_r, C_k \times C_{r-k}$ for $1 \leq k \leq \lfloor r/2 \rfloor$ \\
$C_{r},r\geq 3, p \ne 2$ & $C_{r-k}\times C_{k}$ for $1\leq k\leq  \lfloor r/2 \rfloor$ \\
$D_{r},r\geq 4$ & $D_{k}\times D_{r-k}$ for $2\leq k\leq \lfloor r/2 \rfloor$ \\
$E_{6}$ & $A_{1}\times A_{5},A_{2}\times A_{2} \times A_2$ \\
$E_{7}$ & $A_{1}\times D_{6},A_{7},A_{2}\times A_{5}$ \\
$E_{8}$ & $D_{8},A_{1}\times E_{7},A_{8},A_{2}\times E_{6},A_{4}\times A_{4}$ \\
$F_{4}$ & $A_{1}\times C_{3},B_{4},A_{2}\times A_{2}$ (and if $p=2$ another $B_{4}$) \\
$G_{2}$ & $A_{1}\times A_{1},A_{2}$ (and if $p=3$ another $A_{2}$) \\
\hline
\end{tabular}
\vspace{6pt}

For the classical groups $\G = A_n, B_n, C_n, D_n$, the following partial classification of maximal rank semisimple subgroups will be useful.
If $\G$ is a classical group, we let $V$ denote its natural module (i.e the module of dimension $n+1, 2n+1, 2n, 2n$ respectively).

\begin{lemma}\label{max-rank}  Let $\G$ be a simple classical group over an algebraically closed field $k$
of characteristic $p \geq 0$ with natural module $V$.
If $\H$ is a proper closed connected subgroup of $\G$ of maximal rank, then one of the following holds:
\begin{enumerate}
\item The action of $\H$ on $V$ is reducible, i.e. $\H$ stabilises a proper subspace of $V$;
\item  $\G=\Sp_4$ and $\H=\SO_4$;  or
\item  $p=2$,   $\G=\Sp_{2n}$ and $\H=\SO_{2n}$.
\end{enumerate}
\end{lemma}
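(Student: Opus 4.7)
The plan is to reduce to the reductive case by Borel--Tits, pass from $\H$ to a maximal overgroup of the same maximal rank via the Borel--de~Siebenthal classification (with its characteristic-$p$ refinements), consult the table above for the list of candidates, and then check directly which of these act irreducibly on the natural module $V$.

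First, by Borel--Tits (already invoked in the proof of Lemma \ref{step3-lem}), any closed connected subgroup of $\G$ is either contained in a proper parabolic or is reductive. In the former case every parabolic of a classical $\G$ is the stabiliser of some isotropic flag in $V$, so $\H$ stabilises a proper subspace and lands in (i). I therefore assume $\H$ is reductive; after conjugating I may assume $\H$ contains a fixed maximal torus $T$ of $\G$, so that $\H$ is generated by $T$ together with root subgroups $U_\alpha$ for $\alpha$ in some proper closed subsystem $\Psi \subsetneq \Phi$. Enlarging $\Psi$ to a maximal proper closed subsystem $\Psi_0$ produces a reductive overgroup $\H_0 \supseteq \H$; any $\H_0$-invariant subspace of $V$ is automatically $\H$-invariant, so it suffices to prove the conclusion for $\H_0$ in place of $\H$ (if $\H_0$ turns out to be one of the exceptional copies of $\SO_{2r}$ and $\H \subsetneq \H_0$, a short direct check that every proper connected maximal-rank subgroup of $\SO_{2r}$ stabilises a proper subspace of $V$ puts $\H$ back into (i)).

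Modulo its centre, $\H_0$ is then a maximal-rank semisimple subgroup of $\G$ appearing in the table above (with the caveat that for $C_2$ in odd characteristic one must also include the $C_1 \times C_1$ subsystem coming from $C_2$'s own extended Dynkin diagram). In type $A$ the table is empty, so this case is vacuous. In types $B_r$ ($p \ne 2$), $D_r$, and $C_r$ with $r \ge 3$ in odd characteristic, every entry is by construction the stabiliser of an orthogonal or symplectic direct-sum decomposition of $V$ into non-degenerate summands (for $D_r \subset B_r$ one takes the non-isotropic $0$-weight line and its perpendicular), hence is reducible and lands in (i). The only remaining entries are the copies of $D_r \subset C_r$, which exist exactly when $r = 2$ in any characteristic (via the $B_2 = C_2$ coincidence) or when $p = 2$ and $r \ge 2$ (via the exceptional isogeny between $B_r$ and $C_r$), and these are precisely the embeddings $\SO_{2r} \subset \Sp_{2r}$ of (ii) and (iii).

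To confirm that this last family really acts irreducibly on $V$, I would argue from $T$-weights: $V$ has weights $\pm\varepsilon_1,\ldots,\pm\varepsilon_r$, while the $D_r$-subsystem $\{\pm\varepsilon_i \pm \varepsilon_j\}_{i<j}$ of $C_r$ contains both $\varepsilon_i - \varepsilon_j$ and $-\varepsilon_i - \varepsilon_j$, whose associated root subgroups connect any two weight spaces in at most two steps for $r \ge 2$. Consequently $\SO_{2r}$ has no proper invariant subspace in $V$, confirming that (ii) and (iii) are genuinely outside (i). The main obstacle is simply keeping straight the characteristic-$2$ exceptional isogeny for type $C$ together with the $C_2 = B_2$ small-rank coincidence; these two bookkeeping points are exactly what force the exceptional cases (ii) and (iii) to appear in the statement.
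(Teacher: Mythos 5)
Your proposal is correct in outline, but it takes a genuinely different route from the paper's. You reduce to the reductive case (Borel--Tits), then appeal to the fact that every connected reductive maximal-rank subgroup is a subsystem subgroup, and then march through the Borel--de~Siebenthal classification (the table in the paper, with its characteristic-$p$ refinements). The paper instead avoids invoking the maximal-rank classification altogether: after reducing to the semisimple case by observing that the fixed space of the unipotent radical and the eigenspaces of a positive-dimensional centre are invariant, it cites L\"ubeck's tables of minimal faithful representation dimensions. Writing $\H = \H_1 \times \cdots \times \H_m$ of rank $r$ and examining a faithful irreducible $\H$-module $W$ (which is a tensor product of modules for the $\H_i$), the paper shows $\dim W > r+1$ unless $\H = A_r$, and $\dim W > 2r+1$ for self-dual $W$ except in a tiny list, which it then eliminates using the dichotomy between symmetric and alternating invariant forms (e.g.\ $D_r$ does not embed irreducibly in $C_r$ when $p\ne 2$). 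Your approach buys conceptual clarity and reuses the table already needed for the exceptional cases in Section~\ref{sec: degenerations}; the paper's approach is more self-contained and, crucially, does not depend on the completeness of the maximal-rank classification in small characteristic, which is a nontrivial input (your acknowledged ``characteristic-$p$ refinements'').

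One concrete point to flag: you assert that the $D_2 \subset C_2$ embedding exists irreducibly on $V$ in \emph{any} characteristic via $B_2 = C_2$. This is not right. For $p \ne 2$ the short roots $\{\pm\varepsilon_1\pm\varepsilon_2\}$ of $C_2$ are not a closed subsystem (their sum $2\varepsilon_1$ is a root), so there is no subsystem $\SO_4$ inside $\Sp_4$, and indeed any $A_1\times A_1$ acting irreducibly on a $4$-dimensional $V_1\otimes V_2$ preserves the \emph{symmetric} form $\omega_1\otimes\omega_2$, not an alternating one, so it does not land in $\Sp_4$. The unique maximal-rank semisimple subgroup of $\Sp_4$ when $p\ne 2$ is the long-root $\Sp_2\times\Sp_2$, which is reducible. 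The paper's conclusion (ii) is phrased as a possibility its bound cannot exclude (``unless possibly''), not an asserted case, and the remark immediately after the lemma folds (ii) and (iii) into (i) by passing to the orthogonal form of the group. Your weight-graph argument for irreducibility is fine for $p = 2$, but you should drop the claim that (ii) genuinely occurs for odd $p$.
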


\begin{proof}  If $\H$ has a unipotent radical $U$, then the fixed space of $U$ on $V$ is nonzero and $\H$-invariant, and
 thus the action of $\H$ is reducible.   So we may assume instead that $\H$ is reductive.

Similarly, if the center of $\H$ is positive dimensional,  then its eigenspaces are $\H$-invariant, and thus the action of $\H$ is reducible.
So we may assume  that $\H$ is semisimple.
The dimensions of the smallest representations of a semisimple $\H$ are known  \cite{lubeck}  (and easy
to check).  In particular, if $\H$ is simple of rank $r$, then the smallest dimension of a nontrivial irreducible representation
of $\H$ is $2r$ for $\H$ of type $C$ or $D$,  $2r+1$ for type $B$ (for $r \geq 3, p \ne 2$) and greater than $2r+2$ for $\H$
exceptional.   If $\H$ is of type $A$, then the minimal dimension is $r+1$ and for $r > 1$ any such module is not self dual.   We also note
that for $r > 1$, the smallest nontrivial irreducible self dual representation of $A_r$ is of dimension greater than $2(r+1)$.

So assume that $\H$ is semisimple of rank $r$.    Write $\H=\H_1 \times \ldots \times \H_m$ where the $\H_i$ are simple
and the sum of the ranks of the $\H_i$ is $r$.
Let $W$ be an irreducible $\H$-module where the kernel is contained in the center.  Then $W$ is a tensor product
of nontrivial $\H_i$-representations.  Thus by the remarks above,  $\dim W > r +1$ unless $\H=A_r$.
Similarly,  if $W$ is self dual (equivalently each $W_i$ is self dual), it follows that $\dim W > 2r +1$ unless
possibly $\G=\Sp_4$ and $\H=\SO_4$ or $\H$ is simple of type $B_r, C_r$ or $D_r$.    Note that $\dim B_r = \dim C_r > \dim D_r$,
whence we see that the smallest representation of $B_r$ is $2r+1$ dimensional (for $p \ne 2$; if $p=2$,  $B_r \cong C_r$).
If $p \ne 2$,  $D_r$ does not embed in $C_r$ (since all representations of $D_r$ of dimension $2r$ preserve a quadratic form
while $C_r$ preserves an alternating form in any of its $2r$ dimensional representations).   This completes the proof.
\end{proof}

\begin{remark} If $\G=\Sp_4$, we may view it as $\SO_5$ instead (and then this is not a special case).  Similarly, if $p=2$
and $\G=\Sp_{2n}$ and we view this as $\SO_{2n+1}$ with its natural orthogonal module (which is indecomposable but not
irreducible), in which case $\SO_{2n}$ is the stabilizer of a nondegenerate hyperplane.
\end{remark}

In fact, in the classical case, we can give a better description of the degenerations of maximal semisimple subgroups of maximal rank as follows.

\begin{theorem}[Degenerations in classical groups]\label{classical-degeneration}
Let $\G$ be a classical group of rank $r$.   Let $\H$ be a proper semisimple maximal subgroup of $\G$ of rank $r$.
If $\H'$ is a degeneration of $\H$, then either $\H'$ is conjugate to a subgroup of $\H$ or $\H'$ is a subgroup
of a parabolic subgroup.  In particular, $\H'$ cannot be a semisimple subgroup of rank $r$ unless $\H'$ is conjugate to a subgroup of $\H$.
\end{theorem}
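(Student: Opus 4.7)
The plan is to split into two cases according to whether the degeneration $\H'$ acts reducibly or irreducibly on the natural module $V$ of $\G$, using Corollary \ref{test}(i) together with a closure argument on a complete variety of invariant forms.

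First I would handle the reducible case: suppose $\H'$ stabilizes some proper subspace $W \subset V$. When $\G = \SL(V)$, $\Stab_\G(W)$ is already a maximal parabolic. When $\G$ is symplectic or orthogonal, $\Stab_\G(W) = \Stab_\G(W) \cap \Stab_\G(W^\perp)$; if $W \cap W^\perp \neq 0$ then $\H'$ lies in the parabolic stabilizing the totally isotropic radical $W \cap W^\perp$, while if $W$ is non-degenerate then $\Stab_\G(W)$ is the Levi subgroup of the parabolic stabilizing any maximal totally isotropic flag inside $W$, hence lies in that parabolic. In every sub-case $\H'$ is contained in a proper parabolic.

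Next I would handle the irreducible case: suppose $\H'$ acts irreducibly on $V$. By Corollary \ref{test}(i), $\H$ then also acts irreducibly on $V$, so Lemma \ref{max-rank} forces the exceptional regime $(\G, \H) = (\Sp_{2n}, \SO_{2n})$ (either $n = 2$ in any characteristic, or $p = 2$). In this regime $\H$ preserves additional structure on $V$: a non-degenerate symmetric bilinear form $B$ if $p \neq 2$, or a non-degenerate quadratic form $Q$ polarizing to the ambient symplectic form $\omega$ if $p = 2$. The space of such structures admits a $\G$-equivariant projective compactification $\overline{\mathcal{F}}$ (a projective space of symmetric bilinear forms if $p\neq 2$). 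The incidence variety $Z = \{(a,b,[F]) \in \G \times \G \times \overline{\mathcal{F}} : a,b \text{ preserve } [F]\}$ is closed, and its projection $Y \subset \G \times \G$ is closed by completeness of $\overline{\mathcal{F}}$. Since a pair in $\H^g \times \H^g$ preserves the conjugate form $g \cdot F_0$, we have $W_2(\H) \subseteq Y$, and hence $\overline{W_2(\H)} \subseteq Y$. For any $(u,v) \in \overline{W_2(\H)}$, any limit form $F$ is preserved by $\H' = \overline{\langle u,v\rangle}$; its radical is $\H'$-invariant, and by irreducibility of $\H'$ must be $0$, so $F$ is non-degenerate. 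Transitivity of $\G = \Sp_{2n}$ on non-degenerate forms of the appropriate type then yields $\H' \subseteq \H^g$ for some $g \in \G$, completing this case.

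The main obstacle I anticipate is the clean execution of the compactification step in characteristic $2$, where the parameter space for quadratic forms polarizing to $\omega$ is an affine torsor rather than a projective space; one must either produce a $\G$-equivariant projective compactification preserving the degeneracy analysis, or use a substitute invariant (for instance working on $\G/\H$ with an ad hoc completeness argument). Granting this, the concluding ``in particular'' clause is automatic: a semisimple subgroup of rank $r = \rk(\G)$ is reductive and projects injectively into the Levi quotient of any parabolic containing it, but proper Levi subgroups of $\G$ have semisimple rank strictly less than $r$, so such an $\H'$ cannot lie in a proper parabolic and must fall into the first alternative.
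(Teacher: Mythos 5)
There is a genuine gap, and it is in your treatment of the reducible case. You claim that if $\H'$ stabilises a non-degenerate proper subspace $W$, then $\Stab_\G(W)$ ``is the Levi subgroup of the parabolic stabilizing any maximal totally isotropic flag inside $W$, hence lies in that parabolic.'' This is false. For $\G$ symplectic or orthogonal, $\Stab_\G(W)$ with $W$ non-degenerate is (up to finite index) a product of two smaller classical groups of the same type, e.g.\ $\SO(W)\times\SO(W^\perp)$ or $\Sp(W)\times\Sp(W^\perp)$. This is a reductive subgroup of maximal rank $r$; it preserves no non-zero totally isotropic subspace (its only invariant subspaces are built from $W$ and $W^\perp$, which are non-degenerate), hence lies in no proper parabolic. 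Indeed, as your own concluding paragraph observes, a semisimple subgroup of rank $r$ can never sit in a proper parabolic, so your ``in every sub-case $\H'$ is contained in a proper parabolic'' contradicts the theorem you are trying to prove. What your case analysis is missing is exactly the alternative the theorem asserts: when the stabilised subspace is non-degenerate, one must conclude that $\H'$ is conjugate into $\H$, not that it is parabolic.

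The fix is to start from $\H$ rather than from $\H'$, which is what makes Corollary~\ref{test}(i) do the work. By Lemma~\ref{max-rank} (excluding for the moment the two special cases $\Sp_4\supset\SO_4$ and $p=2$, $\Sp_{2n}\supset\SO_{2n}$) and maximality, $\H$ \emph{is} the stabiliser of a $d$-dimensional non-degenerate subspace $W$ for some specific $d$. Corollary~\ref{test}(i) then forces $\H'$ to stabilise some $d$-dimensional subspace $W'$ (same $d$). If $W'$ is degenerate, $\H'$ fixes the non-zero totally isotropic radical of $W'$ and so lies in a parabolic; if $W'$ is non-degenerate, transitivity of $\G$ on non-degenerate subspaces of fixed dimension and type puts $\H'$ inside a conjugate of $\H$. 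For the two special cases of Lemma~\ref{max-rank}, the paper simply passes to the $\SO$ incarnation ($\Sp_4=\SO_5$, and in $p=2$ view $\Sp_{2n}$ as $\SO_{2n+1}$ acting on its $(2n+1)$-dimensional indecomposable module, where $\SO_{2n}$ is again the stabiliser of a non-degenerate hyperplane), after which the identical subspace argument applies. This sidesteps entirely the compactification-of-forms device in your irreducible case, which you yourself flag as problematic in characteristic~$2$, and which additionally rests on a transitivity claim for $\Sp_{2n}$ on non-degenerate symmetric forms that is not correct as stated (the $\Sp$-conjugacy class of $\omega^{-1}F$ is an invariant, so there are many orbits).
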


\begin{proof} Note that $\H$ is not contained in a parabolic subgroup (since the derived subgroup of a parabolic subgroup
has rank at most $r-1$).   By Lemma \ref{max-rank}, this implies that $\SL_{r+1}$ contains no proper semisimple subgroups of rank
$r$.   Thus, the theorem is vacuously true in this case.     So $\G$ preserves a nondegenerate quadratic or alternating
form on $V$.   For the moment,
 exclude the two special cases in   Lemma \ref{max-rank}.  Then $\H$ is the stabilizer of some $d$-dimensional subspace
$W$ of the natural module.  Then $W$ must be nondegenerate (for otherwise $H$ preserves the radical of $W$ and the stabilizer
of a totally singular subspace is a parabolic subgroup).      It follows by Corollary \ref{test}(i) that $\H'$ stabilizes some $d$-dimensional
subspace $W'$ of $V$.  If $W'$ is nondegenerate, then $\H'$ is conjugate to a subgroup of $\H$.  Otherwise, $\H'$ preserves
the radical of $W'$ and so is contained in a parabolic subgroup.

The proof in the two special cases is identical using the remark above.
\end{proof}

We can now prove Proposition \ref{degeneration-prop} in the classical case.
We leave aside the case that $\G=B_2 = C_2$ in characteristic  $3$.   Let $\G$ be a classical group and
$\H$ a semisimple maximal subgroup of $\G$ with maximal rank. The proposition is vacuous if $\G$ is of type $A_r$,
because $\G$ does not admit any semisimple subgroup of maximal rank.  Hence we may take $\G$ to be $B_r$ for $r \geq 2$, $C_r$ for $r \geq 3$, or $D_r$ for some $r \geq 3$.

If $\G$ is of type $D_r, r > 3$, then
$\H$ is of type $D_k \times D_{r-k}$ and stabilizes an even dimensional subspace of the natural module $V$.
Hence the subgroup $H':=B_{r-1} \leq D_r$ cannot be contained in a conjugate of $\H$ and by Lemma \ref{closed invariants} (i)
it cannot be a degeneration of it either (note that since $r > 3$, we are not using $B_2=C_2$ in characteristic in $3$).

We now turn to the case when $\G$ is of type $B_r$ or $C_r$. We first observe that if $p=0$ (or more generally $p > 2r$)
and  $\G$ is of type $B_r$ or $C_r$, then $\G$ contains a subgroup $\H'$ of type
$A_1$ which acts irreducibly on the natural module $V$ (recall the the $n$th dimensional irreducible representation of
$\SL_2$ preserves a non-degenerate bilinear form which is orthogonal if $n$ is odd and symplectic if $n$ is even, provided that $p > n$).
Since the maximal rank semisimple subgroups are not irreducible, we conclude from Lemma \ref{closed invariants} (i) that $\H'$ cannot be a degeneration of any $\H$.

The above argument handles the case when $r = 2$ and $p > 3$.  Now assume that $p$ is odd and $r > 2$.
  If $G=C_r$, there is a
semisimple  subgroup $SL_2 \otimes SO_r < G$ which acts irreducibly and so cannot be a degeneration
of any proper semisimple subgroup of maximal rank.   Note that $B_r$ contains two semisimple maximal subgroups --
$D_r$ and $A_1 \times D_{r-1}$.  Thus,
the result follows by Theorem \ref{classical-degeneration}.

Finally, consider the case that $p=2$ and $G=C_r,  r \ge 2$.      There are two nonconjugate maximal rank semisimple
subgroups in all cases; for instance one can take $D_r$ and  and the stabilizer of a nondegenerate $2$-space.
 Thus the result follows by Theorem \ref{classical-degeneration}.

  \vspace{3pt}

We now begin the proof of Proposition \ref{degeneration-prop} in the case when $\G$ is an exceptional group.  We split into various cases,
depending on the Dynkin diagram of $\G$ and on the characteristic $p = \ch(k) \geq 0$ of the field.
Assume that the proposition is false and let $\H$ be a maximal semisimple subgroup of maximal rank $r$ that
is a counterexample; thus every proper closed semisimple subgroup of $\G$ is a degeneration of $\H$.

As a general reference for representations of
algebraic groups, see \cite{steinberg-yale}.  Also, see \cite{lubeck} for a list of all the small dimensional representations of the simple
algebraic groups.  Finally, see \cite{lieseitz} for detailed information about subgroups of exceptional groups.
For some of the small characteristic arguments below, we use some straightforward computations to compute dimensions
of composition factors of some small modules restricted to these subgroups.

\emph{Case 1. $\G = G_2$ and $p \neq 2,3,5$.}   Up to conjugacy, there are only two semisimple subgroups of $\G$ of maximal rank,
namely $H_1 = A_1(k) \times A_1(k)$ and $H_2= A_2(k)$, and so $\H$ must be conjugate to one of these groups.
Also, $\G$ has an irreducible module $V$  of dimension $7$.   Since
$\H_1$ is the centralizer of an element of order $3$ and $\H_2$ is the centralizer of involution, neither $\H_i$ can act irreducibly (on any nontrivial
$\G$-module). On the other hand, when $p \neq 2,3,5$, $\G$ also has an $A_1$-subgroup $\H'$ that acts irreducibly on $V$, and the claim then follows from  Corollary \ref{test} (i).

\emph{Case 2. $\G = G_2$ and $p=5$.}  Here one has to proceed a little more delicately than in Case 1; in characteristic $5$ one no longer has the
irreducible $A_1(k)$-subgroup available.   In this case, $\H_1$ has composition factors of dimensions $1,3,3$ and $\H_2$ has
composition factors of dimensions $3$ and $4$ on $V$.

If $\H$ is conjugate to $\H_2$, then we are done by Lemma \ref{test} (ii) with $\H' := \H_1$ and $d := 3$, since $0 \not \geq 4$.  Thus we may
assume that $\H$ is conjugate to $\H_1$.

There is an irreducible module $U$  of dimension $27$, namely a quotient
 of the symmetric square of the $7$ dimensional module by the one dimensional trivial submodule.
 One computes that for $\H_2$, there are composition factors of dimension $6,6,3,3,8,1$.
 For $\H_1$, the composition factors have dimension $5,1, 8,4,9$.   Since
$$ 8 + 9 \not \geq 6 + 6 + 8 $$
we are then done by applying Corollary \ref{test} (ii) with $\H' := \H_2$ and $d=5$.

\emph{Case 3. $\G = G_2$ and $p=2$.}  In this case we still have that the two maximal rank semisimple subgroups up to conjugacy are
$\H_1, \H_2$, but now $G$ has an irreducible module $V$ of dimension $6$.  Then $\H_2$
has only two irreducible composition factors of  dimension $3$, while $\H_1$ has composition factors
of dimension $2$ and $4$.   Applying Corollary \ref{test} (ii) either with $\H=\H_1, \H'=\H_2, d=3$ or $\H=\H_2, \H'=\H_1, d=2$ we obtain the claim.

\emph{Case 4. $\G = G_2$ and $p=3$.}   There are three conjugacy classes of maximal rank semsimple groups in this case, two of which are isomorphic to
$A_2$ (twisted by the graph automorphism), and there are two irreducible $7$-dimensional modules.   One class of $A_2(k)$
is irreducible on one of the modules and one on the other.   The third class is  isomorphic to $A_1(k) \times A_1(k)$.
Since none of the $3$ subgroups is irreducible on both of the $7$ dimensional modules, the result follows from Corollary \ref{test} (i).

\emph{Case 5. $\G = F_4$ and $p>3$.}   Let $V$ be the irreducible module
of dimension $26$.  There is a copy of $B_4(k)$ in $\G$ that has composition factors of dimension $1,9,16$.
By Corollary \ref{test} (i) with $d=15$, we are done unless the largest composition factor of $\H$ on $V$ has
dimension at least $16$.  There is also a subgroup of type $A_1(k) \times C_3(k)$ (the centralizer of an involution)
which has two composition factors of dimensions $14$ and $12$.  Applying Corollary \ref{test} (i) with $d=11$,
 we are done unless all the composition factors of $\H$ have dimension at least $12$.  Since $16+12 > 26$,
 The only remaining case is when $\H$ just has a single composition factor of dimension $26$, i.e. $\H$ acts irreducibly on $V$.
 But no such $\H$ exists by \cite{lss}.

\emph{Case 6. $\G = F_4$ and $p=3$.}  In this case there is an irreducible module $V$ of dimension $25$, and the copy of $B_4(k)$ has two irreducible submodules of dimensions $9$ and $16$ while the copy of $A_1(k) \times C_3(k)$ has composition factors of dimension $12$ and $13$.  Repeating the Case 5 argument (noting that $16+12 > 25$), this handles all cases except the one where $\H$ acts irreducibly on $V$. However, as before, no such $\H$ exists.

\emph{Case 7. $\G = F_4$ and $p=2$.}  In this case, there are \emph{two} irreducible modules $V, V'$ of dimension $26$ (swapped via the graph automorphism, which exists only for $p=2$), and there are two conjugacy classes of subgroups isomorphic to $B_4(k)$ (interchanged by the graph automorphism).   One is irreducible on $V$ and  the other on $V'$.
Applying Corollary \ref{test} (i), we are done unless $\H$ acts irreducibly on both $V$ and $V'$, but no such $\H$ exists.

\emph{Case 8. $\G=E_6$.}  The smallest irreducible $\G(k)$-module has dimension $27$.   There is a copy of $F_4(k)$ that has an irreducible submodule of dimension $26$  ($25$ if $p=3$).  The only maximal
positive dimensional connected semisimple groups of rank $6$ are isomorphic to either $A_1(k) \times A_5(k)$ or $A_2(k) \times A_2(k) \times A_2(k)$ and neither has a composition factor of dimension at least $25$, and so we are done by  Corollary \ref{test} (ii) with $d=24$.

\emph{Case 9. $\G = E_7$.}  In this case, the smallest irreducible module $V$ for $\G(k)$ is $56$-dimensional.   There is a subgroup of type $A_1(k) \times D_6(k)$ that has composition factors of dimensions $24$ and $32$ on $V$.  There is also a
subgroup $E_6(k)$ which has composition factors of dimensions $1, 27$ (each with multiplicity $2$).  Applying Corollary \ref{test} (ii) with $\H'$ equal to the $A_1(k) \times D_6(k)$ subgroup and $d$ equal to $23$ or $31$, we are done unless $\H$ has a composition series with largest factor of dimension at least $32$, and smallest factor of dimension at least $24$.  On the other hand, applying Corollary \ref{test} (ii) with $\H' = E_6(k)$ and $d=26$, we are done unless the composition factors of $\H$ of dimension at least $27$ have total dimension at least $54$.  This rules out all cases except when $\H$ acts irreducibly on $V$.  But no such $\H$ exists.

\emph{Case 10. $\G = E_8$ and $p \neq 2$.}   Let $V$ be the adjoint module of dimension $248$.   There is a subgroup of type $D_8(k)$
which has precisely two composition factors on $V$ of dimensions $120$ and $128$.  There is also a subgroup of type $A_1(k) \times E_7(k)$ that has composition factors of dimension $3, 133$ and $112$.   By Corollary \ref{test} (ii) with $\H'$ equal to the $D_8(k)$ subgroup and $d=119,127$, we are done unless the composition factors of $\H$ have largest dimension at least $128$ and smallest dimension at least $120$, while from Lemma \ref{test} (ii) with $\H'$ equal to the $A_1(k) \times E_7(k)$ subgroup, we are done unless the largest factor has dimension at least $133$.  This covers all cases except when $\H$ acts irreducibly on $V$.  Clearly, this cannot occur since the adjoint module of $\H$ embeds in $V$.

\emph{Case 11. $\G = E_8$ and $p=2$.}  The situation here is the same as that in Case 9, except that the $D_8(k)$ subgroup now has composition factors of dimensions $1,1, 118$ and $128$, and the $A_1(k) \times E_7(k)$ subgroup has composition factors of dimensions  $1,1, 2, 132$ and $112$.
By Corollary \ref{test}, we are done unless  the largest composition factor is at least $132$ and the sum of the dimensions of the compostion factors of dimension greater
than $117$ add up to $246$.   The only possibility that is that $\H$ has a composition factor of dimension at least $246$. If $\H$ has any simple
factor other than $\SL_2$, then $\H$ will have a composition factor of dimension between $7$ and $133$ on $V$, a contradiction.
If $\H$ is the direct product of $8$ copies of $\SL_2$, then any irreducible module has dimension a power of $2$, also a contradiction.

This exhausts all the possible cases for the simple group $\G$, and the proof of Proposition \ref{degeneration-prop} is complete.

\section{Non algebraically closed fields, characteristic 0 and finite fields}\label{finite-dense}

The material in this section is devoted to some refinements of Theorem \ref{sec3-key-prop} when the field of definition is changed, for instance when it is no longer algebraically closed, has characteristic zero or is a finite field. In particular, we prove Corollary \ref{finite}.



In characteristic zero, the assumption on the transcendence degree of $k$ in Lemma \ref{singlewords} is irrelevant and
 moreover the set of good pairs is open:

\begin{theorem}[Characteristic $0$ case]\label{char 0}
Suppose that $\G(k)$ is a semisimple algebraic group over a field $k$ of characteristic zero, and that
$w,w' \in F_2$ are noncommuting words. Then
\[ X:=\{(a,b) \in \G(k) \,  | \, \overline{\langle  w(a,b), w'(a,b)\rangle}  = \G\}\] is
an open subvariety of  $\G \times \G$ defined over $k$ and $X(k)$ is non-empty.
\end{theorem}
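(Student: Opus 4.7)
The plan is to combine three ingredients: the openness of the density locus in characteristic zero (from \cite{gurnewton}), nonemptiness of $X$ over any sufficiently large algebraically closed extension (from Lemma \ref{singlewords}), and Zariski density of $\G(k)$ in $\G$ for infinite $k$.

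For openness, the key input is the result of \cite{gurnewton} cited after Corollary \ref{finite}: in characteristic $0$, the locus
\[ D := \{(x,y) \in \G \times \G \, | \, \overline{\langle x, y \rangle} = \G \} \]
is open in $\G \times \G$ and defined over the prime field $\Q$, hence a fortiori over $k$. The double word map $\phi : \G \times \G \to \G \times \G$, $(a,b) \mapsto (w(a,b), w'(a,b))$, is a $k$-morphism, so $X = \phi^{-1}(D)$ is open in $\G \times \G$ and defined over $k$.

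For nonemptiness over $\overline{k}$, choose any uncountable algebraically closed field $K$ of characteristic zero containing $k$ (for instance, embed $k$ into $\C$ after fixing a transcendence basis). Applying Lemma \ref{singlewords} to $\G_K := \G \times_k K$ produces a generic, hence by Lemma \ref{uncountable} nonempty, set of pairs $(a,b) \in \G(K) \times \G(K)$ for which $w(a,b)$ and $w'(a,b)$ generate a dense subgroup of $\G_K$; that is, $X(K) \neq \emptyset$. Since $X$ is defined over $k$ and has a $K$-point, the Nullstellensatz yields $X(\overline{k}) \neq \emptyset$, and combined with the openness of the preceding paragraph, $X$ is a nonempty open (hence Zariski dense) subvariety of the irreducible variety $\G \times \G$.

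To descend to $k$-points, observe that $k$ is infinite and $\G$ is connected semisimple. Any connected reductive group over an infinite perfect field is $k$-unirational (see e.g. \cite{borel-alggroups}), so $\G(k)$ is Zariski dense in $\G$, and therefore $\G(k) \times \G(k)$ meets every nonempty open subvariety of $\G \times \G$ defined over $k$; applied to $X$ this yields $X(k) \neq \emptyset$. The only nontrivial ingredient beyond Lemma \ref{singlewords} itself is the openness result of \cite{gurnewton}; this is precisely what distinguishes characteristic zero from positive characteristic, where the analogous density condition is only constructible and one needs the more delicate arguments developed later in the section (Lemma \ref{gurtiep} and Lemma \ref{closed invariants}).
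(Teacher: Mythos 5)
Your proof is correct and follows essentially the same route as the paper's: openness of the density locus from \cite{gurnewton} together with the fact that the word map is a $k$-morphism gives that $X$ is open and defined over $k$; nonemptiness of $X$ over an uncountable extension (you cite Lemma~\ref{singlewords} plus Lemma~\ref{uncountable}, the paper cites Theorem~\ref{sec3-key-prop}, but these amount to the same thing) gives that $X$ is a nonempty open subvariety; and Zariski density of $\G(k)\times\G(k)$ via unirationality over an infinite perfect field yields a $k$-point. The one inessential detour is the intermediate step through $X(\overline{k})$ via the Nullstellensatz, which is superfluous: once $X$ is a nonempty open subvariety of the irreducible $\G\times\G$ defined over $k$, density of the $k$-points immediately produces a point of $X(k)$.
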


\begin{proof} Let $\overline{k}$ be the algebraic closure of $k$. By\footnote{This claim also follows from the combination of Lemma \ref{gurtiep} and Lemma \ref{closed invariants} (i).} \cite[Theorem 3.3]{gurnewton}, the set of pairs in a simple algebraic group over an algebraically closed field of characteristic $0$
that generate a dense subgroup is open. It follows that the same is true for $X$ (which by definition is defined over $k$).  The only issue is that it is possible that $X(k)$ might be empty.

Let $X'$ denote the (closed) complement of $X$. If $k'$ is an uncountable field extension of $k$, then it follows by
 Theorem \ref{sec3-key-prop} that $X'(k')$ is not all of $\G(k') \times \G(k')$.   Thus, $\dim X' < 2 \dim \G$.  Since $\G(k) \times \G(k)$ is dense in $\G \times \G$ (see \cite{borel-alggroups}), it cannot be entirely contained in $X'$. Thus $X(k)$ is non-empty.
\end{proof}

We remark that in positive characteristic Theorem \ref{char 0} is no longer true.  If $k$ is algebraic over a finite field, then $\G(k)$ is locally finite.
More generally, if $k'$ is algebraically closed and $k$ is the subfield of $k'$ that consists of algebraic elements over the
prime field, then $\G(k)$ is dense in $\G(k')$ and so the set of pairs $(a,b)$ such that $\langle w_1(a,b), w_2(a,b) \rangle$
is finite is dense.   We first need to recall a result that is essentially \cite[Theorem 11.6]{gt}.

Let $k$ be an algebraically closed field of characteristic $p$ (possibly $p=0$).
Let $\G$  be a simply connected simple algebraic group over $k$. We may assume that $\G$ is defined and split over a prime field (i.e. $\F_p$ or $\Q$ if $p=0$).

\begin{lemma} \label{gurtiep}
There exists a finite collection $V_1,\ldots,V_t$
 of irreducible finite dimensional $\G(k)$-modules with the property that if a proper closed subgroup
 of $\G(k)$ acts irreducibly on $V_i$ for each $1 \leq i \leq t$, then it is finite and conjugate to $\G(\F_q)$ for some finite field $\F_q$ with $p|q$.  In particular, if $\ch(k)=0$, there are no such subgroups.
 \end{lemma}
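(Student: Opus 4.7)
The plan is to find the modules $V_i$ by separately analysing maximal proper closed subgroups of $\G(k)$ of positive dimension and of finite order, appealing in each regime to a known classification result.

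For positive-dimensional subgroups, the Dynkin--Seitz--Liebeck--Seitz--Testerman classification gives finitely many $\G(k)$-conjugacy classes of maximal closed positive-dimensional proper subgroups $M_1,\dots,M_s$. For each class I would pick an irreducible $\G(k)$-module $V_i$ on which $M_i$ acts reducibly. If $M_i$ is contained in a parabolic $P$, any faithful nontrivial irreducible module has a nonzero fixed subspace under the unipotent radical of $P$, hence is not $M_i$-irreducible; a single such module handles all parabolic classes simultaneously. If $M_i$ is reductive, then the ``irreducibility triples'' theorem of Seitz and Testerman says that only finitely many dominant weights $\lambda$ give an $M_i$-irreducible highest weight module $V_\lambda$, so any $\lambda$ outside this finite list will do. By enlarging the collection with suitable exterior and symmetric powers, the same family also detects disconnected overgroups: $H$ normalises $H^\circ$ and permutes its isotypic components, and this permutation cannot yield irreducibility on every module in a sufficiently rich family unless $H^\circ$ is already irreducible.

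At this point any proper closed $H < \G(k)$ that is irreducible on every module must be finite. For the finite case I would invoke the Larsen--Pink structure theorem: such an $H$ either has order bounded purely in terms of $\dim \G$, or contains a normal subgroup of bounded index that is the group of $\F_q$-rational points of a connected simple algebraic group in characteristic $p$. The bounded-order exceptional classes form finitely many $\G(k)$-conjugacy classes and are killed by adjoining finitely many further test modules. In the remaining regime, irreducibility on a sufficiently varied family of highest weight modules, combined with Steinberg's tensor product theorem and the restriction formulas for $\G(\F_q)$, forces the associated algebraic group to share the type and rank of $\G$ and the rational-points subgroup to fill out the whole $\G(\F_q)$ rather than a proper Lie-type overgroup. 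Hence $H$ is conjugate to $\G(\F_q)$ with $p \mid q$; in characteristic zero no finite groups of Lie type occur inside $\G(k)$, so this regime is empty and the final clause follows.

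The hard parts are the descent from $H^\circ$-reducibility to $H$-reducibility in the disconnected case, and ruling out proper Lie-type overgroups such as subfield subgroups $\G(\F_{q_0}) < \G(\F_q)$ in the Larsen--Pink step. Both are standard consequences of the Liebeck--Seitz and Larsen--Pink machinery, handled by including enough Frobenius twists and tensor products in the family $\{V_i\}$, which is why the statement is quoted essentially from \cite{gt}.
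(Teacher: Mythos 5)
Your proposal is roughly in the right territory but diverges from the paper's approach in the most important step, and the divergence both complicates matters and opens a genuine gap.

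The paper (formally just a citation to \cite[Thm.~11.6]{gt} and \cite{ls-adjoint}, but with a sketch in the remark that follows the lemma) kills \emph{all} positive-dimensional proper closed subgroups with a single module: take $V_1$ to be the adjoint module $\Lie(\G)$ (or its large composition factor when $p$ is small). If $\H$ is closed and $\dim\H>0$, then $\Ad(\H)$ stabilises $\Lie(\H)$, which is nonzero and proper, so $\H$ is automatically reducible on $V_1$. This works uniformly for connected and disconnected $\H$ and requires no classification of maximal subgroups at all. You instead propose to classify maximal positive-dimensional subgroups up to conjugacy and, for each reductive class, to invoke Seitz--Testerman irreducibility triples to pick a dominant weight outside the finite list of $M_i$-irreducible ones. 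That is far heavier machinery for a weaker result, and it creates the disconnected-subgroup problem your own proposal flags: if $\dim\H>0$ but $\H$ is disconnected, Clifford theory allows $\H$ to be irreducible on a module where $\H^\circ$ is reducible (the component group can permute the $\H^\circ$-isotypic pieces transitively), and your assertion that adjoining exterior/symmetric powers forces $\H^\circ$-irreducibility is not argued and does not obviously hold. The adjoint-module argument simply avoids this.

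On the finite side, both you and the paper fall back on Larsen--Pink plus a large-dimensional module $V_2$ to dispose of the bounded-order case, so there you are aligned with the paper's remark. But your sentence about ``ruling out proper Lie-type overgroups such as subfield subgroups $\G(\F_{q_0}) < \G(\F_q)$'' is backwards: subfield subgroups $\G(\F_q)$, $p\mid q$, are exactly the permitted conclusion, not something to exclude. What actually has to be excluded is a finite Lie-type subgroup of a \emph{different} algebraic group sitting in $\G$, or a proper extension of a subfield subgroup by outer automorphisms; neither is a ``subfield subgroup.'' Finally, your argument, like the paper's, is ultimately a proof by citation --- you cite Seitz--Testerman, Liebeck--Seitz and Larsen--Pink where the paper cites \cite{gt} and \cite{ls-adjoint} --- so the appropriate standard is whether your sketch would persuade a reader that the cited results assemble correctly; the disconnected case and the subfield-subgroup confusion mean it currently does not.
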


 \begin{proof}   If $\G(k)$ is classical, this is precisely \cite[Theorem 11.6]{gt}.
 If $\G(k)$ is exceptional, it follows easily by the main results of
 \cite{ls-adjoint}.
 \end{proof}

\noindent \emph{Remark.}   Note that for any fixed irreducible $\G(k)$-module with $k$ of positive characteristic,
$\G(\F_q)$ will be irreducible for $q$ sufficiently large.  If the module is restricted, this is true without exception.
In almost all cases, the collection of $V_i$ given above can be taken to be restricted. We will only use the
fact that the subfield groups are the only possible closed subgroups that are irreducible on the collection
of modules.
We note further that for fixed rank and $p$ sufficiently large,  the $V_i$ are independent of $p$  (indeed, aside
from very small characteristics, the dominant weights can be chosen independent of characteristic).

Let us comment a bit on the previous lemma. When $\ch(k)=0$, we can take $V_1$ to be the Lie algebra of $\G$ under the adjoint representation of $\G$. Then, if a closed subgroup $\H$ of $\G$ acts irreducibly on $V_1$, it must be finite, because otherwise $\H$ stabilizes its Lie subalgebra, which is a non trivial proper subspace of $V_1$. Now by Jordan's theorem, there is a bound $m(d)>0$ ($d=\dim(\G)$) such that every finite subgroup of $\G(k)$ has an abelian normal subgroup of index at most $m(d)$. In particular a finite subgroup of $\G(k)$ cannot act irreducibly on any vector space of dimension $>m(d)$. Then, simply set $V_2$ to be any irreducible $\G$-module of dimension $>m(d)$. Thus we see that $t=2$ is enough in the above lemma when $\ch(k)=0$. With a bit more effort, one can show that there is a single irreducible module that suffices.

When $\ch(k)>0$, the idea is similar, but the details are more involved. To begin with, the adjoint representation may not be irreducible. For example, the Lie algebra $\mathfrak{sl}_{dp}$ of $\SL_{dp}$ ($d \in \N$, $p$ prime) contains the scalar matrices as a one-dimensional submodule. However, if $p=\ch(k)>3$, then $\SL_{dp}$ is the only exception and, even for $\SL_{dp}$, the adjoint representation has only $2$ composition factors (of dimensions $1$ and $n^2-2$). So when $\ch(k)>3$, we can still set $V_1$ to be the adjoint representation (or its non trivial composition factor in case of $\SL_{dp}$) and it will still be true that every proper closed subgroup $\H$ of $\G$ that acts irreducibly on $V_1$ must be finite. At this point, as in \cite[Theorem 11.7]{gt}, one can invoke the main result of Larsen-Pink \cite{larsen-pink} according to which either $\H$ is a subfield subgroup, or $|\H|\leq m(d)$ for some constant $m(d)>0$. Then, as above, taking for $V_2$ any irreducible $\G$-module of dimension $>m(d)$, we see that if a closed subgroup of $\G$ is irreducible on both $V_1$ and $V_2$, then it is a subfield subgroup. The characteristics $2$ and $3$ require a more careful case by case analysis in the same spirit. See \cite[Theorem 11.17]{gt} for more details. \vspace{11pt}

We can now prove the following result.

\begin{corollary} \label{dense char p}    Suppose that $\G(k)$ is a semisimple algebraic group over an algebraically closed field $k$ of
characteristic $p > 0$ with $k$ not algebraic over a finite field.    If $p=3$, assume that $\G(k)$ has no factors of type
$C_2$.  Let $w_1, w_2 \in F_2$ be non-commuting words.
Then for a dense set of $(a,b) \in \G(k) \times \G(k)$,
one has $\overline{\langle w_1(a,b), w_2(a,b) \rangle} = \G(k)$.
\end{corollary}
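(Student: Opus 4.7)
The plan is to combine Theorem~\ref{sec3-key-prop}, applied over an uncountable algebraically closed extension of $k$, with Lemma~\ref{gurtiep}, so as to reduce the density assertion to the Zariski density of the set of infinite-order elements of $\G(k)$. First, let $V_1,\ldots,V_t$ be the irreducible $\G$-modules furnished by Lemma~\ref{gurtiep}, so that any proper closed subgroup of $\G$ acting irreducibly on every $V_i$ is a finite subfield subgroup $\G(\F_q)^g$. Let $S \subset \G \times \G$ be the set of pairs $(a,b)$ such that $\langle w_1(a,b), w_2(a,b)\rangle$ acts irreducibly on every $V_i$; by Lemma~\ref{closed invariants}(i), $S$ is an open subvariety of $\G \times \G$ defined over $\F_p$. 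Applying Theorem~\ref{sec3-key-prop} over an uncountable algebraically closed extension $K$ of $k$ (the hypothesis on $C_2$ in characteristic $3$ passes through) produces pairs generating dense subgroups of $\G(K)$, so $S(K) \neq \emptyset$ and hence $S$ is nonempty and, being open in an irreducible variety, dense; in particular $S(k)$ is Zariski dense in $\G(k) \times \G(k)$.

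Next I would show that the set $P$ of infinite-order elements of $\G(k)$ is Zariski dense in $\G$. Since $k$ is algebraically closed and not algebraic over $\F_p$, the group $k^*$ contains infinitely many elements that are not roots of unity, and an elementary coordinate-wise argument in a maximal torus $T \cong \mathbb{G}_m^r$ shows that $T_\infty := \{t \in T(k) : t \text{ has infinite order}\}$ is Zariski dense in $T$. The conjugation morphism $\phi \colon \G \times T \to \G$, $(g,t) \mapsto gtg^{-1}$, is dominant; for any nonempty open $U \subset \G$, the preimage $\phi^{-1}(U)$ is nonempty open in $\G \times T$, and its image under the (open) second projection to $T$ is a nonempty open subset of $T$, necessarily meeting $T_\infty$. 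Lifting back along $\phi$ produces an infinite-order element of $\G(k)$ lying in $U(k)$, which proves the Zariski density of $P$.

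To finish, let $U \subset \G \times \G$ be an arbitrary nonempty open subvariety and set $U' := U \cap S$, which is nonempty open. Since $w_1 \colon \G \times \G \to \G$ is dominant by Proposition~\ref{borel-words-prop}, so is its restriction to the dense open subvariety $U'$; its image therefore contains a dense open subvariety $W \subset \G$. By the density of $P$, the intersection $W(k) \cap P$ is nonempty, so we may pick $y_0$ in it. The fibre $(w_1|_{U'})^{-1}(y_0)$ is a nonempty subvariety of $U'$, hence has $k$-points, producing $(a_0, b_0) \in U'(k)$ with $w_1(a_0, b_0) = y_0$ of infinite order. Then the closure $\overline{\langle w_1(a_0,b_0), w_2(a_0,b_0)\rangle}$ acts irreducibly on each $V_i$ (because $(a_0, b_0) \in S(k)$) and is infinite (because it contains $y_0$), so by Lemma~\ref{gurtiep} it cannot be a proper subgroup and must equal $\G(k)$. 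Thus $(a_0, b_0) \in U(k)$ witnesses the desired density.

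The main obstacle is the Zariski density of the infinite-order locus in $\G(k)$, which is precisely where the hypothesis that $k$ is not algebraic over $\F_p$ is used, via the existence of non-torsion elements in $k^*$; once this density is in hand, the rest follows routinely from openness of the irreducibility condition, dominance of word maps, and Lemma~\ref{gurtiep}.
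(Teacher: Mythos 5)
Your proof follows the paper's argument essentially step for step: the modules $V_i$ from Lemma~\ref{gurtiep}, openness of the joint-irreducibility locus via Lemma~\ref{closed invariants}(i), nonemptiness by applying Theorem~\ref{sec3-key-prop} over an uncountable extension, density of the infinite-order locus via a maximal torus, and finally combination via dominance of the word map $w_1$ (Proposition~\ref{borel-words-prop}). Your final paragraph is a slightly more hands-on unwinding of the paper's one-line observation that the preimage of a dense set under a dominant map is dense, but the content is identical.

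The one thing you skip is the reduction to $\G$ simple. Lemma~\ref{gurtiep} is stated for a \emph{simply connected simple} algebraic group, so the modules ``$V_1,\ldots,V_t$ furnished by Lemma~\ref{gurtiep}'' do not exist for a general semisimple $\G$; the paper opens its proof with ``We may assume that $\G(k)$ is simple'' precisely to make the invocation of Lemma~\ref{gurtiep} legitimate. You apply the lemma directly to semisimple $\G$, which is not covered by its hypotheses. This is a small but real omission: to run your argument you must first pass to the universal cover $\G = \G_1 \times \cdots \times \G_t$ and reduce to the simple factors (and this reduction is not a triviality, since dense generation in each factor does not by itself rule out diagonal degenerations; one has to supplement the irreducibility conditions on the factor modules with an argument, e.g.\ along the lines of Step~2 of the main proof, guaranteeing the generated group is not ``diagonal''). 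Aside from this, the proposal is correct and matches the paper's proof.
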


\begin{proof}  We may assume that $\G(k)$ is simple.
Let $V_1, \ldots, V_m$ be as in Lemma \ref{gurtiep} a collection of finite dimensional irreducible $\G(k)$ modules such that every
closed subgroup of $\G(k)$ that is irreducible on all the $V_i$ is a subfield subgroup (i.e. is a conjugate of  $\G(\F_q)$ for some $q$). Clearly, the set of $(a,b) \in \G(k) \times \G(k)$ such that
$\langle w_1(a,b), w_2(a,b) \rangle$ acts irreducibly on each of the $V_i$ is open and defined over $\F_p$ (see Lemma \ref{closed invariants} (i)).   By Theorem \ref{sec3-key-prop} ,  this set is non-empty
over an uncountable field, and therefore, as in
the proof of Theorem \ref{char 0}, also over any algebraically closed field
(including the algebraic closure of a finite field).   If  $k$ is not algebraic over a finite
field, it is easy to see that the set of elements of $\G(k)$ that have infinite
order is dense (this reduces to the case of a
$1$-dimensional torus).  However, the preimage of a dense subset by dominant map is itself dense. So by Borel's theorem on the dominance of the word map (i.e. Proposition \ref{borel-words-prop}), we get that the set of pairs $(a,b)$ such that $w_1(a,b)$ is of infinite order and such that $\langle w_1(a,b), w_2(a,b) \rangle$ is irreducible on each of the $V_i$'s is  dense in $\G(k) \times \G(k)$, whence the result.
\end{proof}

If $k$ is algebraic over a finite field,  $\G(k)$ is locally finite and the statement of Corollary \ref{dense char p} needs to be altered. Generically, $w_1(a,b)$ and $w_2(a,b)$ will not generate a dense subgroup, but they will generate a \emph{subfield} subgroup, that is a conjugate of $\G(\F)$ for some finite field $\F$. We have the following.

\begin{proposition}\label{locally-finite} Let $\F_q$ be a finite field and $k$ its algebraic closure. Suppose that $\G$ is a simply connected
simple algebraic group over $\F_q$ with $\G$ not $C_2$ if $q$ is a power of $3$.
   Let $X$ be the non-empty open subvariety of $\G(k) \times \G(k)$  consisting of those pairs $(a,b)$
which generate an irreducible subgroup of $G$ on each of the modules $V_i$ defined in Lemma \ref{gurtiep}.
 Let $w_1, w_2 \in F_2$ be non-commuting words. Then
$$  X \subseteq \{(a,b) \in \G(k) \times \G(k):  \langle w_1(a,b), w_2(a,b) \rangle  = \G(\F_{q_0})^g \text{ for some } q_0, g\}. $$
\end{proposition}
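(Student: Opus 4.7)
The plan is to reduce the proposition to verifying the inclusion $\phi(X) \subseteq X$, where $\phi : \G \times \G \to \G \times \G$ is the morphism $(a,b) \mapsto (w_1(a,b), w_2(a,b))$. Since $k = \overline{\F_q}$ is a union of finite fields, every finitely generated subgroup of $\G(k)$ is finite and hence Zariski closed. Thus for $(a,b) \in X(k)$, the subgroup $\langle a, b \rangle$ is a proper closed subgroup of $\G(k)$ acting irreducibly on each $V_i$, so Lemma \ref{gurtiep} gives $\langle a, b \rangle = \G(\F_{q_1})^h$ for some finite field $\F_{q_1}$ and some $h \in \G(k)$. The subgroup $\langle w_1(a,b), w_2(a,b)\rangle$ is then automatically finite and hence a proper closed subgroup of $\G(k)$. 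By Lemma \ref{gurtiep} applied a second time, it will be of the required form $\G(\F_{q_0})^g$ as soon as we know that it too acts irreducibly on each $V_i$, i.e.\ as soon as $\phi(a,b) \in X$.

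The key claim is therefore $X \subseteq \phi^{-1}(X)$. Set $Y := X \cap \phi^{-1}(X)$. The variety $X$ is open in $\G \times \G$ by Lemma \ref{closed invariants}(i); since $\phi$ is a morphism, $\phi^{-1}(X)$ is open as well, so $Y$ is open in $X$. Its complement $X \setminus Y = X \cap (\G \times \G \setminus \phi^{-1}(X))$ is the intersection of $X$ with a closed subvariety of $\G \times \G$, and is therefore closed in $X$. But $X$ is a nonempty open subvariety of the irreducible variety $\G \times \G$ (recall $\G$ is a connected algebraic group), hence $X$ is itself irreducible and in particular connected. It therefore suffices to show $Y \neq \emptyset$, and this nonemptiness is where the main content of the argument lies.

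To produce a point of $Y$, I extend scalars to an uncountable algebraically closed field $K$ of characteristic $p$. By Theorem \ref{sec3-key-prop}, which applies because $\G$ is assumed not to be $C_2$ when $p = 3$, there exists $(a,b) \in \G(K) \times \G(K)$ generating a strongly dense free subgroup of $\G(K)$. Since $w_1, w_2 \in F_2$ do not commute and $\langle a, b \rangle$ is free on $a, b$, the elements $w_1(a,b)$ and $w_2(a,b)$ do not commute in $\G(K)$ either; by the strong density property, they then generate a Zariski dense subgroup of $\G(K)$. Because invariant subspaces of a subgroup coincide with those of its Zariski closure, and $\G$ itself acts irreducibly on each $V_i$, both $\langle a, b \rangle$ and $\langle w_1(a,b), w_2(a,b) \rangle$ act irreducibly on every $V_i$. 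Hence $(a,b) \in X(K) \cap \phi^{-1}(X)(K) = Y(K)$, so $Y$ is nonempty as a subvariety. The connectedness of $X$ now forces $Y = X$, which gives $\phi(X) \subseteq X$ and completes the proof.
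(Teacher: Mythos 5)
Your reduction to the inclusion $X \subseteq \phi^{-1}(X)$ is valid, and your non-emptiness argument (base change to an uncountable field, Theorem \ref{sec3-key-prop}, and the observation that a Zariski dense subgroup is irreducible on each $V_i$ because $\G$ is) is correct. The fatal problem is the topological step. You observe that $Y = X \cap \phi^{-1}(X)$ is open in $X$ and that $X \setminus Y$ is closed in $X$; but these are one and the same statement, not two complementary ones. To conclude $Y = X$ from connectedness of $X$ you would need $Y$ to be \emph{closed} in $X$ as well as open, and nothing you have written gives that. In the Zariski topology a non-empty open subset of an irreducible variety is dense but is almost never everything, so ``$Y$ open, non-empty, $X$ irreducible'' only yields that $Y$ is dense in $X$ --- i.e.\ the conclusion holds generically, not for all of $X$.

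The gap cannot be patched, because the inclusion $X \subseteq \phi^{-1}(X)$ is false for $X$ as you (and, literally read, the proposition) define it. Take $\G = \SL_2$, $w_1 = x$, $w_2 = yxy^{-1}$ (non-commuting in $F_2$) and $p$ large. All order-$3$ elements of $\SL_2(\F_p)$ are conjugate, and one can find two of them, $a$ and $a' = b_0 a b_0^{-1}$, with $\langle a, a'\rangle \cong 2.A_4$. Varying $b$ over the coset $b_0 C(a)$, which has roughly $p$ elements while the bounded subgroups of $\SL_2(\F_p)$ containing $\langle a,a'\rangle$ account for only $O(1)$ of them, one can arrange $\langle a, b\rangle = \SL_2(\F_p)$ with $bab^{-1} = a'$ still. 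Then $(a,b) \in X$, yet $\langle w_1(a,b), w_2(a,b)\rangle = \langle a, a'\rangle \cong 2.A_4$ is an abstract copy of $\SL_2(\F_3)$ but not a subgroup $\G(\F_{q_0})^g$ with $\F_{q_0} \subseteq \overline{\F_p}$, and accordingly it fails to act irreducibly on some $V_i$. The set the paper actually works with (see the proof of Corollary \ref{dense char p} and the use of the proposition in Corollary \ref{finite}) is $\{(a,b) : \langle w_1(a,b), w_2(a,b)\rangle \text{ acts irreducibly on each } V_i\}$: this is open by Lemma \ref{closed invariants}(i), non-empty by exactly your base-change argument, and for any pair in it the group $\langle w_1(a,b), w_2(a,b)\rangle$ is finite, hence a proper closed subgroup irreducible on every $V_i$, so Lemma \ref{gurtiep} gives the conclusion immediately. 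You should prove the proposition for that set rather than attempt to show that $\phi$ preserves the locus of irreducible pairs.
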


\begin{proof} As in the proof of Corollary \ref{dense char p}, this follows immediately from Lemma \ref{gurtiep} together with Lemma \ref{closed invariants} (i).
\end{proof}

\noindent \emph{Proof of Corollary \ref{finite}}.
Let $r$, $G=\G(\F_q)$, and $w_1, w_2$ be as in Corollary \ref{finite}, and
let $k$ be the algebraic closure of $\F_q$.
Aside from small characteristics, the $V_i$ used to describe the open variety $X$ from Proposition \ref{locally-finite} are independent of the characteristic. It follows then from the Lang-Weil bound
(\cite[Prop. 3.4]{larsen-shalev-pre})
that with probability $1 - o_{q \to \infty}(1)$, a random $(a,b) \in \G(\F_q)$ will be such that
$\langle w_1(a,b), w_2(a,b) \rangle = \G(\F_{q_0})^g$ for some $g \in \G(\F_q)$ and some $q_0$
with $\G(\F_{q_0}) \leq \G(\F_{q})$.

By \cite[Theorem 1.1]{fulmangur},  the number of conjugacy classes in $G(\F_{q_0})$ is $O(q_0^r)$,
where the implied constant is absolute. Also, by \cite[Theorem 1.4]{fulmangur}, each conjugacy class in
$\G(\F_q)$ has size
\[ \ll|\G(\F_q)|(1 + \log_q r)|/q^r,\] where the implied constant is again absolute.
Thus, \[\left|\bigcup_{q_0 < q, g \in G} \G(\F_{q_0})^g \right| \ll (1 + \log_q r) \log_2 q |\G(\F_q)|/q^r .\]
It now follows by \cite[Lemma 2.1]{larsen-shalev-jams} or \cite[Prop. 3.4]{larsen-shalev-pre}
that the probability for random $(a,b)$ that $w_1(a,b)$ (or $w_2(a,b)$) belongs to $\G(\F_{q_0})^g$ for some proper divisor $q_0$ of  $q$
tends to $0$ as $q \rightarrow \infty$ (for the groups aside from the Suzuki and Ree groups, this is essentially
the Lang-Weil theorem \cite{lang-weil} combined with Borel's theorem on word maps -- in the other cases, the references above involve a twisted version of Lang-Weil).
Combining these two remarks gives Corollary \ref{finite}.
\endproof

We end this section by showing that Theorem \ref{sec3-key-prop} continues to hold with weaker hypotheses on the field $k$.

\begin{theorem}\label{rationality issues}
Let $\G$ be a semisimple algebraic group defined over a field $k$ and $k'$ a field extension of $k$.
If $\ch(k)=3$, assume that $G$ has no factors of type $C_2$. In either of the following two situations, $\G(k')$ contains a strongly dense free subgroup:
\begin{itemize}
\item[(i)] $k'$ is algebraically closed and its transcendence degree over $k$ is at least $2\dim \G$.
\item[(ii)] $k'$ has infinite transcendence degree over its prime field.
\end{itemize}
\end{theorem}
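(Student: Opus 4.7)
The plan is to revisit the proof of Theorem \ref{sec3-key-prop}, pin down the field of definition of the countable union of ``bad'' subvarieties in $\G \times \G$ whose complement consists of pairs generating a strongly dense free subgroup, and then produce a $k'$-rational point avoiding them.

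From the proof of Theorem \ref{sec3-key-prop}, the set of pairs $(a,b) \in \G \times \G$ which fail to generate a strongly dense free subgroup is contained in a countable union $\bigcup_j V_j$ of proper closed subvarieties of $\G \times \G$, of two types: the word varieties $\mathcal{V}_w$ for $w \in F_2 \setminus \{1\}$, and, for each non-commuting pair $w_1,w_2 \in F_2$ together with each $\G$-conjugacy class of maximal connected semisimple proper subgroup $\H \leq \G$ of maximal rank, the preimage of $\overline{\bigcup_g (\H^g \times \H^g)}$ under the double word map $(a,b) \mapsto (w_1(a,b),w_2(a,b))$. Since only finitely many conjugacy classes of such $\H$ arise (see the table in Section \ref{sec: degenerations}), and each is defined over a finite extension of any finitely generated field of definition of $\G$, we may, after replacing each $V_j$ by the finite union of its Galois conjugates (which remains a proper subvariety of the irreducible $\G \times \G$), take all the $V_j$ to be defined over a single finitely generated subfield $L \subseteq k$.

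For case (i), since $k'$ is algebraically closed and $\mathrm{tr.deg}_k(k') \geq 2\dim\G$, the function field $k(\G \times \G)$ embeds into $k'$ over $k$, producing a point $(a,b) \in \G(k') \times \G(k')$ with $\mathrm{tr.deg}_k(k(a,b)) = 2\dim\G$. Transcendence degree is unchanged on passage from $k$ to its algebraic closure $\overline{k}$, so $(a,b)$ has transcendence degree $2\dim\G$ over $\overline{k}$ as well. Any proper closed subvariety of $\G \times \G$ defined over $\overline{k}$ has dimension strictly less than $2\dim\G$ and so cannot contain $(a,b)$. In particular $(a,b)$ lies outside every $V_j$, and thus $\langle a,b \rangle \leq \G(k')$ is the required strongly dense free subgroup.

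For case (ii), we invoke the variant of Lemma \ref{uncountable} noted in the remark immediately following it (cf.\ \cite[Lemma 2]{borel-dom}): when $k'$ has infinite transcendence degree over its prime field $k_0$, for any irreducible variety $V$ over a finitely generated subfield $L \subseteq k'$ and any countable collection of non-empty open subvarieties $U_i \subseteq V$ defined over $L$, the intersection $\bigcap_i U_i$ contains a $k'$-rational point. The field $L$ from the first step is finitely generated over $k_0$, so $\mathrm{tr.deg}_{k_0}(L)$ is finite and $k'/L$ has infinite transcendence degree. Applying the variant to $V = \G \times \G$ and the countable collection of open complements of the $V_j$ produces the required pair $(a,b) \in \G(k') \times \G(k')$. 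The main technical point is the bookkeeping in the first step, namely ensuring that all the bad subvarieties can simultaneously be defined over a single finitely generated subfield $L \subseteq k$; this rests on the finiteness of the list of conjugacy classes of maximal rank maximal semisimple subgroups and the trivial observation that a finite union of proper subvarieties of an irreducible variety is still proper.
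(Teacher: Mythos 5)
Your overall strategy --- pin down a finitely generated subfield $L \subseteq k$ such that all the ``bad'' subvarieties in $\G\times\G$ are defined over $\overline{L}$, then produce a $k'$-rational point avoiding them --- is the right one, and your argument for part (i) is valid though it goes the opposite way from the paper's. The paper applies Theorem~\ref{sec3-key-prop} over a large uncountable extension $K$ of $k$, passes to the subfield $K'$ generated by $k$ and the coordinates of the good pair, observes $\mathrm{tr.deg}_k(K') \le 2\dim\G$, and embeds $K'$ into $k'$. You instead embed the function field $k(\G\times\G)$ into $k'$ and argue by transcendence degree that the image avoids every proper closed subvariety defined over $\overline{k}$. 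Both work; your (i) is actually robust to the issue I describe next, since your transcendence-degree bound kills \emph{all} proper closed subvarieties over $\overline{k}$ and does not depend on listing them.

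There is, however, a real gap in your description of the bad subvarieties, and it does matter for part (ii). You assert that the set of pairs failing to generate a strongly dense free subgroup is contained in the union of (a) the word varieties $\mathcal{V}_w$ and (b) the preimages of $\overline{W_2(\H)}$ under double word maps. This omits the subvarieties coming from Lemma~\ref{generic-max-rank} which ensure that $\overline{\langle w(a,b)\rangle}$, $\overline{\langle w'(a,b)\rangle}$ and $\overline{\langle [w(a,b),w'(a,b)]\rangle}$ are maximal tori; those conditions are what Step~3 uses to exclude the possibility that $\overline{\langle w(a,b), w'(a,b)\rangle}$ is contained in a parabolic or is reductive of lower rank, neither of which is ruled out by avoiding (a) and (b) alone. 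The missing subvarieties are also defined over $\overline{L}$, so the gap is fixable, but as written your list is not sufficient to conclude strong density.

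The paper sidesteps this bookkeeping in part (ii) by \emph{not} re-opening the proof of Theorem~\ref{sec3-key-prop}. Instead it uses the external characterization supplied by Lemma~\ref{gurtiep}: define, for each noncommuting pair $w_1,w_2$ and each module $V_i$ from that lemma, the set $Y_{i,w_1,w_2}$ of pairs $(a,b)$ for which $\langle w_1(a,b), w_2(a,b)\rangle$ fails to act irreducibly on $V_i$. These sets are closed by Lemma~\ref{closed invariants}(i), proper by Theorem~\ref{sec3-key-prop}, and are visibly defined over the algebraic closure of any finitely generated field of definition of $\G$. Avoiding all of them forces freeness of $\langle a,b\rangle$ (a cyclic group cannot act irreducibly on all the $V_i$) and then density of every noncyclic $\langle w_1(a,b), w_2(a,b)\rangle$ (irreducible plus infinite rules out the subfield-subgroup alternative of Lemma~\ref{gurtiep}). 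That one-step characterization is what you should use in place of your partial list.
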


\begin{proof}
To prove item (i), observe that, according to Theorem \ref{sec3-key-prop}, if $K$ is an uncountable algebraically closed field extension of $k$, then $\G(K)$ contains a pair $(a,b)$ of group elements that generate a strongly dense free subgroup. If $K'$ is the subfield of $K$ generated by $k$ and the coefficients of $a$ and $b$, then its transcendence degree over $k$ is at most $2\dim\G$. In particular, $K'$ can be embedded in $k'$ and we are done.

To see (ii), we recall \cite[Lemma 2.2]{borel-dom} according to which every countable union of proper closed subvarieties of $\G \times \G$ all defined over the algebraic closure of a given finitely generated subfield $l$ of $k$ cannot cover all of $\G(k) \times \G(k)$. This is the case in particular for the family of varieties $Y_{i,w_1,w_2}$ defined (for some non commuting pair of words $w_1,w_2$ in the free group) as the subset of pairs $(a,b)$ such that $\langle w_1(a,b), w_2(a,b) \rangle$ does not act irreducibly on the $\G$-module $V_i$ defined in Lemma \ref{gurtiep}. Each such variety is defined over the algebraic closure of any finitely generated subfield of $k$ on which $\G$ is defined. Also each $Y_{i,w_1,w_2}$ is closed by Lemma \ref{closed invariants} $(i)$ and proper by Theorem \ref{sec3-key-prop}. Finally, observe that any pair $(a,b)$ that lies in $\G(k) \times \G(k)$ and outside all the $Y_{i,w_1,w_2}$ must generate a strongly dense free subgroup. Indeed, since $\langle w_1(a,b),w_2(a,b)\rangle$ acts irreducibly on each $V_i$, we must have $w_1(a,b)\neq 1$ for all non-trivial words $w_1$; this means that $(a,b)$ generates a free subgroup and that each $\langle w_1(a,b),w_2(a,b)\rangle$ is infinite and therefore dense by Lemma \ref{gurtiep}.
\end{proof}

\noindent \emph{Remark.} It is plausible that $\G(k)$ always contains a strongly dense free subgroup as soon as $k$ is not algebraic over a finite field. See Problem 1 in Section \ref{sec1}.


\section{On Borel's theorem and the Hausdorff-Banach-Tarski paradox}\label{secparadox}

In this section, we explain the consequences of our main theorem for the simultaneous paradoxical decompositions of homogeneous spaces. Borel's paper \cite{borel-dom}, where the dominance of word maps was established, was in part motivated by a question of Dekker \cite{dekker1, dekker2} and a related paper by Deligne and Sullivan \cite{deligne-sullivan} regarding instances of the Hausdorff-Banach-Tarski paradox on higher dimensional spheres. In particular, in combination with the above cited work of Dekker, the papers by Deligne-Sullivan and by Borel established that every $n$-dimensional sphere ($n \geq 2$) is \emph{$4$-paradoxical}; namely that one can find two rotations $a,b \in \SO(n+1)$ and partition the sphere $S^n$ into $4$ disjoint parts $A_1 \cup A_2 \cup A_3 \cup A_4$ such that both $A_1 \cup aA_2$ and $A_3 \cup bA_4$ are partitions of the same sphere $S^n$.

It is well-known (see \cite[Chapter 4]{wagon}) that a sufficient condition for a $G$-set $X$ to be $4$-paradoxical is the existence in $G$ of a free subgroup on two generators whose action on $X$ is \emph{locally commutative}. By definition, this means that point stabilizers are commutative. In particular, every free action (i.e. where every nontrivial element acts without fixed points) is locally commutative. Thus Deligne and Sullivan establish that there is a free action of a free group of rotations on every odd dimensional sphere of dimension at least $3$. For even dimensional spheres $S^n$, no free action is possible of course, because every rotation has an axis and thus fixed points. Nonetheless, Dekker showed that if $n \neq 4$ is even, then a locally commutative action of a free group of rotations can be found, and later, in that same paper, Borel gave a different argument valid for all $n\geq 2$ including $n=4$.

In fact, Borel extended the Deligne-Sullivan result in two directions. First, using his result on the dominance of the word map, he proved (see  \cite[Theorem A]{borel-dom}) that generic pairs of elements in a compact semisimple Lie group $U$ generate a free subgroup $F$ with the property that every nontrivial element of $F$ acts on each homogeneous space $U/V$ with the minimal possible number of fixed points, that is the Euler characteristic $\chi(U/V)$ as prescribed by the Lefschetz fixed point formula. Second, by a different argument which did not rely on the dominance of the word map result,  he showed that if the subgroup $V$ is given and of maximal rank in $U$, then one may find a free subgroup $F$ in $U$ whose action on $U/V$ is locally commutative (see \cite[Theorem 3]{borel-dom}). This last statement answered Dekker's question and showed that the $4$-dimensional sphere is $4$-paradoxical.

Using our Theorem \ref{sec3-key-prop}, we can unify both theorems and thus extend their conclusion simultaneously to all homogeneous spaces of any given semisimple compact Lie group, as follows.

\begin{theorem}\label{U/V} Let $U$ be a semisimple compact real Lie group. Almost every pair $a,b \in U$ \textup{(}both in the sense of Baire and in the measure theoretic sense\textup{)} has the following property. The subgroup $F=\langle a , b \rangle$ is free, and for every proper closed\footnote{Note that every subgroup of $U$ which is closed for the Euclidean topology is also closed for the Zariski topology (see e.g. \cite{vinberg}).} subgroup $V$ of $U$, the action of $F$ on $U/V$ is locally commutative \textup{(}i.e. point stabilizers are commutative\textup{)}, and every nontrivial element of $F$ generates topologically a maximal torus and has precisely the minimum possible number of fixed points on $U/V$, namely the Euler characteristic $\chi(U/V)$.
\end{theorem}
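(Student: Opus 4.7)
The plan is to deduce Theorem~\ref{U/V} from Theorem~\ref{sec3-key-prop} by complexifying and then transferring the Zariski-generic conclusions on the ambient complex algebraic group back down to $U$. Let $\G$ be the complexification of $U$, so that $\G(\C)$ is a complex semisimple algebraic group and $U$ sits in $\G(\C)$ as a Zariski-dense maximal compact real form. The elementary transfer principle I will use is: any proper Zariski-closed subvariety of $\G(\C)$ (or of $\G(\C)\times\G(\C)$) defined over $\R$ meets $U$ (resp.\ $U\times U$) in a proper real-analytic subvariety, hence in a set that is automatically nowhere dense in the Euclidean topology and of Haar measure zero. Consequently any \emph{generic} subset of $\G(\C)\times\G(\C)$ cut out over $\R$ intersects $U\times U$ in a subset that is simultaneously Baire-comeagre and of full Haar measure.

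First I would combine Theorem~\ref{generic strongly dense} with Lemma~\ref{generic-max-rank}, applying the latter to every non-trivial word $w \in F_2$, to produce a generic subset of $\G(\C)\times\G(\C)$ on which $\langle a,b\rangle$ is a strongly dense free subgroup and the Zariski closure of $\langle w(a,b)\rangle$ is a maximal torus of $\G(\C)$ for every such $w$. The transfer principle moves this to a Baire-comeagre, full-measure set of pairs in $U\times U$. To upgrade ``Zariski generates a maximal torus of $\G(\C)$'' to ``topologically generates a maximal torus of $U$'', I would invoke Borel's dominance theorem (Proposition~\ref{borel-words-prop}): the complement of the set of topological generators of maximal tori is a proper real-analytic subvariety of $U$, and its preimage under the smooth dominant word map $w\colon U\times U \to U$ is again a proper real-analytic subvariety, hence meagre and of Haar measure zero. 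Intersecting these countably many conditions yields a Baire-comeagre, full-measure set of pairs $(a,b) \in U\times U$ on which $F := \langle a,b\rangle$ is a strongly dense free subgroup of $\G(\C)$ and every non-trivial $w(a,b)$ topologically generates a maximal torus of $U$.

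Fix such a pair $(a,b)$ and let $V \leq U$ be any proper closed subgroup. By the footnote in the theorem, $V$ is Zariski-closed, and a real-dimension count shows that its complexification $V_\C \subseteq \G(\C)$ is a proper complex algebraic subgroup. For any $h \in U$ the point stabilizer $F \cap hVh^{-1}$ lies in the proper algebraic subgroup $hV_\C h^{-1}$; if it contained two non-commuting elements, strong density of $F$ would force it to be Zariski-dense in $\G(\C)$, a contradiction. Hence every point stabilizer is cyclic, which is precisely local commutativity. For the fixed-point count, any $g \in F\setminus\{1\}$ topologically generates a maximal torus of $U$ and is in particular a generic element in Borel's sense, so Borel's Theorem~A in \cite{borel-dom} supplies the equality $|\mathrm{Fix}(g, U/V)| = \chi(U/V)$ for every proper closed $V$ simultaneously.

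The heart of the argument is therefore Step~1: propagating the strong density supplied by Theorem~\ref{sec3-key-prop} from $\G(\C)$ to simultaneous Baire and measure-generic statements on the compact real form $U$, together with the upgrade of the Zariski maximal-torus condition of Lemma~\ref{generic-max-rank} to genuine topological generation inside $U$. Once these are secured, the rest is essentially formal: local commutativity uniformly in $V$ is immediate from strong density, and the pointwise fixed-point count reduces to Borel's own Lefschetz-type computation in \cite{borel-dom}, the novelty being that here a single pair $(a,b)$ works for every $V$ at once rather than requiring a separate generic condition for each $V$.
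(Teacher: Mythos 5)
Your proof is correct and follows essentially the same route as the paper's: complexify, apply Theorem \ref{generic strongly dense} and Lemma \ref{generic-max-rank} to the complex group, transfer Zariski genericity to Baire and Haar genericity on $U\times U$, derive local commutativity from strong density, and get the fixed-point count from the Lefschetz/Hopf--Samelson computation recorded by Borel. The one place you deviate is the ``upgrade'' from Zariski to topological generation of a maximal torus, and this detour is unnecessary: for $\gamma\in U$ whose cyclic group has Zariski closure equal to a maximal torus $T_\C\leq\G(\C)$, the Euclidean closure of $\langle\gamma\rangle$ is automatically the compact maximal torus $T_\C\cap U$, since a closed subgroup of the compact torus $(S^1)^r$ that is Zariski dense in $(\C^*)^r$ must be all of $(S^1)^r$ (any proper closed subgroup has Zariski closure of strictly smaller dimension); this is exactly what the paper observes. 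As a side remark, the intermediate assertion that the complement of the set of topological generators of maximal tori is a \emph{single} proper real-analytic subvariety of $U$ is not correct (it contains every torsion element and is dense; it is only a countable union of proper real-analytic subvarieties), although the meagreness and measure-zero conclusion you draw from it still holds.
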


Classically, this has the following consequence in terms of paradoxical decompositions.

\begin{corollary} Let $U$ be a semisimple compact Lie group. Then there exists $a,b \in U$ such that every homogeneous space of $U$ is $4$-paradoxical with respect to $a$ and $b$. In other words, for every proper closed subgroup $V$ in $U$, one can partition $U/V$ into $4$ disjoint sets $A_1 \cup A_2 \cup A_3 \cup A_4$ in such a way that $A_1 \cup aA_2$ and $A_3 \cup bA_4$ are both partitions of $U/V$. In fact almost every pair $a,b$ in $U$ has these properties.
\end{corollary}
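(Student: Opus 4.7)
The plan is to deduce this corollary almost immediately from Theorem~\ref{U/V} combined with a classical result from the theory of paradoxical decompositions. I would proceed in three short steps.

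First, I would invoke Theorem~\ref{U/V} to select a pair $(a,b) \in U \times U$ -- and in fact a set of pairs of full Haar measure and comeagre in the Baire sense -- such that $F := \langle a, b \rangle$ is a free group on two generators and, for \emph{every} proper closed subgroup $V \leq U$, the action of $F$ on $U/V$ is locally commutative (i.e.\ point stabilizers in $F$ are abelian). Note that the measure/Baire genericity is crucial here because the class of $V$'s ranges over all proper closed subgroups of $U$ -- uncountably many a priori -- so we cannot simply intersect over $V$. Theorem~\ref{U/V} handles this uniformity in $V$ for us: the \emph{same} generic pair $(a,b)$ works for all $V$ simultaneously.

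Second, I would appeal to the classical theorem (see Wagon \cite{wagon}, Chapter~4) that if a free group $F$ on two generators $a,b$ acts on a set $X$ in a locally commutative way, then $X$ admits a $4$-piece paradoxical decomposition using the two generators $a, b$. The proof of that classical statement (which I would only cite) proceeds by the standard Hausdorff-type trick: one removes from $X$ the countable set of fixed points of nontrivial elements of $F$, partitions the remainder according to the first letter of the reduced word giving an element of $F \cdot x$, obtains a $4$-paradoxical decomposition of the free part, and then uses local commutativity to redistribute the fixed points without destroying the partition structure.

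Third, applying this to $X = U/V$ for each proper closed subgroup $V \leq U$ and to the fixed generic pair $(a,b)$ from the first step, I obtain disjoint subsets $A_1, A_2, A_3, A_4 \subset U/V$ whose union is $U/V$, with $A_1 \cup aA_2$ and $A_3 \cup bA_4$ both equal to $U/V$; this is exactly the required $4$-paradoxical decomposition of $U/V$ with respect to $a$ and $b$. The ``almost every pair'' statement then follows from the corresponding genericity clause of Theorem~\ref{U/V}.

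The main (and essentially only) obstacle has already been overcome in proving Theorem~\ref{U/V}: namely, producing a single generic pair that acts locally commutatively on every homogeneous space simultaneously. Once that is in hand, the passage to paradoxical decompositions is entirely classical and requires no further input from our main theorem.
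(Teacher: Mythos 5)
Your proposal is correct and matches the paper's approach exactly: the paper also derives the corollary directly from Theorem~\ref{U/V} by citing the classical fact (Wagon, Chapter~4) that a locally commutative action of a free group of rank two yields a $4$-piece paradoxical decomposition. The paper is even terser than your write-up, simply stating ``Classically, this has the following consequence'' after Theorem~\ref{U/V}, so your added detail (including the observation that Theorem~\ref{U/V} gives a single generic pair working uniformly over all $V$) is a faithful and slightly more explicit rendering of the intended argument.
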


Note that we recover the sphere $S^n$ as the homogeneous space $U/V$, where $U=\SO(n+1,\R)$ and $V=\SO(n,\R)$. Other examples of such $U/V$ include the complex and quaternionic projective spaces and all symmetric spaces of compact type. We refer the reader to Wagon's book \cite{wagon} for a thorough exposition of the Hausdorff-Banach-Tarski and related problems.\vspace{5pt}

\noindent \emph{Proof of Theorem \ref{U/V}.} The compact Lie group $U$ can be written $U=\U(\R)$, where $\U$ is a semisimple algebraic group defined over $\R$. Moreover every proper (Euclidean)-closed subgroup $V$ of $U$ is algebraic and of the form $V=\V(\R)$ for some algebraic subgroup $\V$ defined over $\R$ (see e.g. \cite{vinberg}). By Theorem \ref{sec3-key-prop} (more precisely Theorem \ref{generic strongly dense}), almost every pair $(a,b)$ in $U$ generates a strongly dense free subgroup $F$ of $U$ (note that the notion of Zariski genericity used in Theorem \ref{generic strongly dense} is stronger than genericity in the sense of Baire in $U\times U$ and in the sense of Lebesgue measure). In particular, given any proper closed subgroup $V$, $V \cap F$ is trivial or infinite cyclic. This implies that the action of $F$ on $U/V$ is locally commutative in the above sense. Moreover (see Lemma \ref{generic-max-rank}), every element $\gamma$ of $F$ generates a subgroup whose closure (in both the Zariski and Euclidean topologies) is a maximal torus $T_\gamma$ of $U$. So the set of fixed points of $\gamma$ on $U/V$ is the set of fixed points of $T_\gamma$. It is $0$ unless $V$ contains a maximal torus $T$, in which case it is equal to the index of $N_V(T)$ in $N_U(T)$. In all cases this number coincides with $\chi(U/V)$ (see \cite{borel-dom} and the reference \cite{hopf} cited there).
\endproof

\noindent \emph{Remark.} As mentioned above, Borel proved a weaker form of Theorem \ref{U/V} in which the choice of $F$ was not independent of $V$ and $V$ was assumed to have maximal rank. His proof consisted first in finding a special copy of $\SO(3,\R)$ inside $U$ that was not contained in any proper subgroup of maximal rank and then pick $F$ inside this copy of $\SO(3,\R)$. Since every free subgroup of $\SO(3,\R)$ is strongly dense, the argument was complete. Of course, this method cannot be applied to prove our strengthening (i.e. Theorem \ref{U/V}), because such an $F$ lies in a proper closed subgroup and one may then choose $V$ to be precisely that subgroup, in which case $U/V$ is not paradoxial with respect to $F$, because $F$ has a global fixed point.\vspace{10pt}

\noindent \emph{Remark.} The above results extend without difficulty to non-compact semisimple real Lie groups $U$ provided the homogeneous space $U/V$ is such that $V$ is a proper \emph{algebraic} subgroup of $U$.

\setcounter{tocdepth}{1}

\end{document}